\setlist[enumerate]{label={\arabic*.}}
\newtheorem{theorem}{Theorem}[section]
\newtheorem{lemma}[theorem]{Lemma}
\newtheorem{corollary}[theorem]{Corollary}
\theoremstyle{definition}
\newcommand{\squishlist}{
 \begin{list}{$\bullet$}
  { \setlength{\itemsep}{0pt}
     \setlength{\parsep}{3pt}
     \setlength{\topsep}{3pt}
     \setlength{\partopsep}{0pt}
     \setlength{\leftmargin}{2.5em}
     \setlength{\labelwidth}{1em}
     \setlength{\labelsep}{0.5em} } }
\newcommand{\squishlisttwo}{
 \begin{list}{$\triangleright$}
  { \setlength{\itemsep}{0pt}
     \setlength{\parsep}{0pt}
    \setlength{\topsep}{0pt}
    \setlength{\partopsep}{0pt}
    \setlength{\leftmargin}{2em}
    \setlength{\labelwidth}{1.5em}
    \setlength{\labelsep}{0.5em} } }
\newcommand{\squishend}{
  \end{list}  }
\newcommand{\kcol}{$k$-\textsc{Colouring}}
\newcommand{\squid}[1]{$(4,#1)$-$squid$}
\newcommand{\hl}[1]{$(3,#1)$-$squid$}
\newcommand{\noneighbs}{\overline{N[v]}}
\begin{document}

\title{On the finiteness of $k$-vertex-critical $2P_2$-free graphs with forbidden induced squids or bulls}
\author{
Melvin Adekanye\\
\small Department of Computing Science\\
\small The King's University\\
\small Edmonton, AB Canada\\
\and
Christopher Bury\\ 
\small Department of Computing Science\\
\small The King's University\\
\small Edmonton, AB Canada\\
\and
Ben Cameron\\ 
\small School of Mathematical and Computational Sciences\\
\small University of Prince Edward Island\\
\small Charlottetown, PE Canada\\
\small brcameron@upei.ca\\
\and
Thaler Knodel\\ 
\small Department of Computing Science\\
\small The King's University\\
\small Edmonton, AB Canada
}

\date{\today}

\maketitle

\begin{abstract}
A graph is $k$-vertex-critical if $\chi(G)=k$ but $\chi(G-v)<k$ for all $v\in V(G)$ and $(G,H)$-free if it contains no induced subgraph isomorphic to $G$ or $H$. We show that there are only finitely many $k$-vertex-critical $(2P_2,H)$-free graphs for all $k$ when $H$ is isomorphic to any of the following graphs of order $5$:
\begin{itemize}
\item $bull$,
\item $chair$,
\item $claw+P_1$, or 
\item $\overline{diamond+P_1}$.
\end{itemize}

\noindent The latter three are corollaries of more general results where $H$ is isomorphic to $(m, \ell)$-$squid$ for $m=3,4$ and any $\ell\ge 1$ where an $(m,\ell)$-$squid$ is the graph obtained from an $m$-cycle by attaching $\ell$ leaves to a single vertex of the cycle. For each of the graphs $H$ above and any fixed $k$, our results imply the existence of polynomial-time certifying algorithms for deciding the $k$-colourability problem for $(2P_2,H)$-free graphs. Further, our structural classifications allow us to exhaustively generate, with aid of computer search, all $k$-vertex-critical $(2P_2,H)$-free graphs for $k\le 7$ when $H=bull$ or $H=(4,1)$-$squid$ (also known as $banner$).
\end{abstract}


\section{Introduction}

The \kcol{} decision problem is to determine, for fixed $k$, if a given graph admits a proper $k$-colouring. This problem is of considerable interest in computational complexity theory since for all $k\ge 3$, it is one of the most intuitive of Karp's~\cite{Karp1972} original 21 NP-complete problems. Research has focused on many computational aspects of \kcol{} including approximation~\cite{approximategraphcoloring} and heuristic algorithms~\cite{heuristicgraphcoloring}, but we are most interested in the substructures that can be forbidden to produce polynomial-time algorithms to solve \kcol{} for all $k$. The substructures we are interested in forbidding are induced subgraphs. We say a graph is $H$-free if it contains no induced copy of $H$. One of the most impressive results to this end is Ho\`{a}ng et al.'s 2010 result that \kcol{} can be solved in polynomial-time on $P_5$-free graphs for all $k$~\cite{Hoang2010}. The full strength of this result was later demonstrated by Huang's~\cite{Huang2016} result that \kcol{} remains NP-complete for $P_6$-free graphs when $k\ge 5$, and for $P_7$-free graphs when $k\ge 4$. A polynomial-time algorithm to solve $4$-\textsc{Colouring} for $P_6$-free graphs was later developed by Chudnovsky et al.~\cite{P6free1,P6free2,P6freeconf}. It has long been known that \kcol{} remains NP-complete on $H$-free graphs when $H$ contains an induced cycle~\cite{KaminskiLozin2007,MaffrayMorel2012} or claw~\cite{LevenGail1983,Holyer1981}. Thus, $P_5$ is the largest connected subgraph that can be forbidden for which \kcol{} can be solved in polynomial-time for all $k$ (assuming P$\neq$NP). As such, deeper study of $P_5$-free graphs in relation to \kcol{} has attracted significant attention. One of these deeper areas is determining which subfamilies of $P_5$-free graphs admit polynomial-time \kcol{} algorithms that are fully \textit{certifying}. 

An algorithm is certifying if it returns with each output, an easily verifiable witness (called a \textit{certificate}) that the output is correct. For \kcol{}, a certificate for ``yes'' output is a $k$-colouring of the graph, and indeed the \kcol{} algorithms for $P_5$-free graphs from~\cite{Hoang2010} return $k$-colourings if they exist. To discuss certificates for ``no'' output for \kcol{}, we must first define that a graph is \textit{$k$-vertex-critical} if it is not $(k-1)$-colourable, but every proper induced subgraph is. Since every graph that is not $k$-colourable must contain an induced $(k+1)$-vertex-critical graph, such an induced subgraph is a certificate for ``no'' output of \kcol{}. Indeed, if there are only finitely many $(k+1)$-vertex-critical graphs in a family $\mathcal{F}$, then a polynomial-time algorithm to decide \kcol{} that certifies ``no'' output for all graphs in $\mathcal{F}$ can be implemented by searching for each $(k+1)$-vertex-critical graph in $\mathcal{F}$ as an induced subgraph of the input graph (see~\cite{P5banner2019}, for example, for the details). Thus, proving there are only finitely many $(k+1)$-vertex critical graphs in a subfamily of $P_5$-free graphs implies the existence of polynomial-time certifying \kcol{} algorithms when paired with the algorithms that certify ``yes'' output from~\cite{Hoang2010}. 

It should also be noted that classifying vertex-critical graphs is of interest in resolving the Borodin–Kostochka Conjecture~\cite{BKconj1977} that states if $G$ is a graph with $\Delta(G)\ge 9$ and $\omega(G)\le \Delta(G)-1$, then $\chi(G)\le \Delta(G)-1$. This connection comes in the proof technique often employed (see for example \cite{CranstonLafayetteRabern2022,HaxellNaserasr2023,WuWu023}) to consider a minimal counterexample to the conjecture which must be vertex-critical~\cite{GuptaPradhan2021}.

A natural question to ask then is for which $k$ are there only finitely many $k$-vertex-critical $P_5$-free graphs. Bruce et al.~\cite{Bruce2009} showed that there are only finitely many $4$-vertex-critical $P_5$-free graphs, and this result was later extended by Maffray and Morel~\cite{MaffrayMorel2012} to develop a linear-time certifying algorithm for $3$-\textsc{Colouring} $P_5$-free graphs. Unfortunately, for each $k\ge 5$, Ho\`{a}ng et al.~\cite{Hoang2015} constructed infinitely many $k$-vertex-critical $P_5$-free graphs. This result caused attention to shift to subfamilies of $P_5$-free graphs, usually defined by forbidding additional induced subgraphs. A graph is $(H_1,H_2,\dots,H_{\ell})$-free if it does not contain an induced copy of $H_i$ for all $i\in\{1,2,\dots,\ell\}$. It is known that there are only finitely many $5$-vertex-critical $(P_5,H)$-free graphs if $H$ is isomorphic to $C_5$~\cite{Hoang2015}, $bull$~\cite{HuangLiXia2023}, or $chair$~\cite{HuangLi2023} (where $bull$ is the graph in Figure~\ref{fig:bull} and $chair$ is the subgraph of the graph in Figure~\ref{subfig:4squid} induced by the set $\{u_1,u_2,u_4,w_1,w_2\}$). When moving beyond $k=5$ to larger values of $k$, the most comprehensive result is K. Cameron et al.'s~\cite{KCameron2021} dichotomy theorem that there are infinitely many $k$-vertex-critical $(P_5,H)$-free graphs for $H$ of order $4$ if and only if $H$ is $2P_2$ or $K_3+P_1$. An open question was also posed in~\cite{KCameron2021} to prove a dichotomy theorem for $H$ of order $5$, and there has been substantial work toward this end. On the positive side of resolving this open question, it is known that there are only finitely many $k$-vertex-critical $(P_5,H)$-free graphs for all $k$ if $H$ is isomorphic to $banner$~\cite{Brause2022}, $dart$~\cite{Xiaetal2023}, $K_{2,3}$~\cite{Kaminski2019}, $\overline{P_5}$~\cite{Dhaliwal2017}, $P_2+3P_1$~\cite{CameronHoangSawada2022}, $P_3+2P_1$~\cite{AbuadasCameronHoangSawada2022}, $gem$~\cite{CaiGoedgebeurHuang2021,CameronHoang2023}, or $\overline{P_3+P_2}$~\cite{CaiGoedgebeurHuang2021} (where we refer to the table in~\cite{CameronHoang2023P5C5} for the adjacencies of each of these graphs). On the negative side of resolving the open question is that there are infinitely many $k$-vertex-critical $(P_5,H)$-free graphs for every graph $H$ of order $5$ containing an induced $2P_2$ or $K_3+P_1$ (a corollary from the dichotomy theorem in~\cite{KCameron2021}) for $k\ge 5$, and for $H=C_5$ for all $k\ge 6$~\cite{CameronHoang2023P5C5}. It therefore remains unknown for which graphs $H$ of order $5$ there are only finitely many $(P_5,H)$-free graphs for all $k$ when $H$ is one of the following:

\begin{multicols}{3}
\begin{itemize}
 \item $claw+P_1$
 \item $P_4+P_1$ 
  \item $chair$
 \item $\overline{diamond+P_1}$ 
 \item $C_4+P_1$
 \item $bull$ 
 \item $\overline{K_3+2P_1}$
 \item  $\overline{P_3+2P_1}$
 \item $W_4$
 \item $K_5-e$
 \item $K_5$
\end{itemize}
\end{multicols}

In this paper, we prove that there are only finitely many $k$-vertex-critical $(2P_2,H)$-free graphs for all $k$ for four of the graphs above. Namely,  $claw+P_1$,  $\overline{diamond+P_1}$, $chair$, and $bull$. Three of these are corollaries of more general results that there are only finitely many $k$-vertex-critical ($2P_2$, \squid{\ell})-free graphs and ($2P_2$,\hl{\ell})-free graphs for all $k,\ell$ (see Figure~\ref{fig:mellsquid}). We also show that there are only finitely many $(2P_2,K_3+P_1,P_4+P_1)$-free graphs for all $k$. These results, while not as strong as they would be if they were for $P_5$-free graphs instead of $2P_2$-free graphs, are nonetheless important as every infinite family of $k$-vertex-critical $P_5$-free graphs known (i.e., the families from~\cite{Hoang2015,Chudnovsky4criticalconnected2020} and the generalized families from~\cite{CameronHoang2023P5C5}) are actually $(2P_2,K_3+P_1)$-free as well. Thus, there is no known $H$ for which there are infinitely many $k$-vertex-critical $(P_5,H)$-free graphs and only finitely many that are $(2P_2,H)$-free for some $k$. Therefore, our results provide strong evidence for the finiteness of $k$-vertex-critical $(P_5,H)$-free graphs for each $H$ we consider, but will also still be of interest if it turns out that there are infinitely many that are $(P_5,H)$-free but only finitely many that are $(2P_2, H)$-free.

In addition, while most proof techniques for showing finiteness of vertex-critical graphs involve bounding structure around odd holes or anti-holes using Strong Perfect Graph Theorem~\cite{Chudnovsky2006}, or using a clever application of Ramsey's Theorem, our techniques are novel and simple. We show that $k$-vertex-critical graphs in each of the families in question are necessarily $(P_3+m P_1)$-free for some $m$ depending only on $k$ and the orders of the forbidden induced subgraphs.  We then apply the result from~\cite{AbuadasCameronHoangSawada2022} that there are only finitely many $k$-vertex-critical $(P_3+m P_1)$-free graphs for all $m$ and $k$ to get the immediate corollary of finiteness.  This approach results in short and easy-to-follow proofs that only require basic facts about vertex-critical graphs to prove finiteness. 

\subsection{Outline}
The rest of this paper is structured as follows. We include the background and preliminary results that are used in the proofs of our main results in Section~\ref{sec:prelims}. We prove that there are only finitely many $k$-vertex-critical $(2P_2,bull)$-free graphs for all $k$ in Section~\ref{sec:bull}. We then prove two more general results which have as corollaries that there are only finitely many $k$-vertex-critical $(2P_2,H)$-free graphs for all $k$ when $H$ isomorphic to $chair$, $claw+P_1$ (Section~\ref{sec:ellsquid}), and $\overline{diamond+P_1}$ (Section~\ref{sec:fountain}). In Section~\ref{sec:critgen}, we discuss methods for using a computer-aided search to exhaustively generate all $k$-vertex-critical graphs in certain families and include our enumerations for the complete sets of all such graphs that are $(2P_2,H)$-free graphs for $H=bull$ or $H=banner$ for all $k\le 7$. Finally, we conclude the paper with a discussion on future research directions and we conclude this section with a brief subsection outlining definitions and notation. 

\subsection{Notation}

If vertices $u$ and $v$ are adjacent in a graph we write $u\sim v$ and if they are nonadjacent we write $u\nsim v$. For a vertex $v$ in a graph, $N(v)$, $N[v]$ and $\noneighbs$ denote the open neighbourhood, closed neighbourhood, and set of nonneighbours of $v$, respectively. More precisely, $N(v)=\{u\in V(G): u\sim v\}$, $N[v]=N(v)\cup\{v\}$, and $\noneighbs=V(G)\setminus N[v]$. We call a vertex $v$ of a graph $G$ \textit{universal} if $N[v]=V(G)$, and \textit{nonuniversal} otherwise. We let $\delta(G)$ and $\Delta(G)$ denote the minimum and maximum degrees of $G$, respectively. We let $\alpha(G)$ denote the independence number of $G$. For subsets $A$ and $B$ of $V(G)$, we say $A$ is \textit{(anti)complete} to $B$ if $a$ is (non)adjacent to $b$ for all $a\in A$ and $b\in B$. If $A=\{a\}$ then we simplify notation and say $a$ is (anti)complete to $B$. . We use $\chi(G)$ and $\omega(G)$ to denote the \textit{chromatic number} and \textit{clique number} of $G$, respectively.

\section{Preliminaries}\label{sec:prelims}

We will make extensive use of the following lemma and theorem throughout the paper.

\begin{lemma}[\cite{Hoang2015}]\label{lem:nocomparablecliques}
Let $G$ be a graph with chromatic number $k$. If G contains two disjoint $m$-cliques $A = \{a_1, a_2,\ldots , a_m\}$ and $B = \{b_1, b_2,\ldots , b_m\}$ such that $N(a_i) \setminus A \subseteq N(b_i) \setminus B$ for all $1 \le i \le m$, then $G$ is not $k$-vertex-critical.
\end{lemma}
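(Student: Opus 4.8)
The plan is to prove the contrapositive-flavored statement directly: assuming $G$ has chromatic number $k$ and contains two disjoint $m$-cliques $A$ and $B$ with $N(a_i)\setminus A \subseteq N(b_i)\setminus B$ for all $i$, I will exhibit a vertex $v$ such that $\chi(G-v)=k$, which shows $G$ is not $k$-vertex-critical. The natural candidate is $v=a_1$ (indeed any $a_i$ would do); so it suffices to produce a proper $k$-colouring of $G-a_1$ and argue it cannot be extended, or more directly, to show that \emph{every} proper $(k-1)$-colouring attempt fails — but the cleanest route is to show $\chi(G-a_1)\ge k$, i.e. $G-a_1$ is still not $(k-1)$-colourable, equivalently $\chi(G-a_1)=k$ since it is an induced subgraph of $G$.

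The key step is a colour-swapping argument. Suppose for contradiction that $c$ is a proper $(k-1)$-colouring of $G-a_1$. I want to extend $c$ to all of $G$ using only $k-1$ colours, contradicting $\chi(G)=k$. The obstruction to colouring $a_1$ is that its neighbours outside $A$ together with $a_2,\dots,a_m$ might use up all $k-1$ colours. Since $a_2,\dots,a_m$ form a clique with $a_1$, they occupy $m-1$ distinct colours; and $N(a_1)\setminus A$ occupies some set of colours. If fewer than $k-1$ colours appear on $N[a_1]\setminus\{a_1\}$, we are done immediately. Otherwise, I use the clique $B$: recolour the vertices of $B$ (which lie in $G-a_1$, as $A\cap B=\emptyset$) by transporting colours along the correspondence $a_i\mapsto b_i$. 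Concretely, first free up colours by recolouring $b_1,\dots,b_m$; then assign to $a_1$ a colour not used on $N(a_1)\setminus A$ nor on $\{a_2,\dots,a_m\}$. The hypothesis $N(a_i)\setminus A\subseteq N(b_i)\setminus B$ is exactly what guarantees that whatever colour we want to move onto $a_i$ is compatible with $b_i$'s external neighbourhood, so the recolouring of the $b_i$'s stays proper. One should set this up carefully as a single simultaneous recolouring: give each $b_i$ a colour distinct from all colours on $N(b_i)\setminus B$ and distinct from the colours eventually placed on the other $a_j$'s and $b_j$'s — a greedy argument on the clique $A\cup B$ works since $|N(a_i)\setminus A|$-many colours plus the $2m-1$ clique constraints can be met when the palette has enough colours; the point is that the \emph{union} $\bigcup_i (N(a_i)\setminus A)$ is contained in $\bigcup_i (N(b_i)\setminus B)$, so colours used on the $a_i$-side were already ``reserved'' on the $b_i$-side and nothing new is forced.

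I expect the main obstacle to be making the simultaneous recolouring of $A\cup B$ rigorous without case analysis: one must ensure the new colours on the $b_i$ do not collide with each other, do not collide with the colours we intend to put on the $a_i$, and remain proper against $N(b_i)\setminus B$ — all while only $k-1$ colours are available and the external neighbourhoods may be large. The trick that resolves this is to colour the $2m$ vertices of $A\cup B$ greedily in a good order (say $a_1,\dots,a_m,b_m,\dots,b_1$) after deleting their current colours: when we reach $a_i$ its forbidden colours are those of $N(a_i)\setminus A$ plus the $\le 2m-1$ already-coloured clique vertices, and when we reach $b_i$ its forbidden colours are those of $N(b_i)\setminus B$ plus already-coloured clique vertices; the inclusion hypothesis ensures $a_i$ never needs a colour that $b_i$ cannot later avoid. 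Since in the worst case $N[a_1]\setminus\{a_1\}$ already used all $k-1$ colours, the subtlety is purely in verifying that freeing a colour at some $b_i$ and pushing it to the corresponding $a_i$ never propagates a conflict — and the containment of neighbourhoods is precisely the invariant that blocks such propagation. Once the extended $(k-1)$-colouring of $G$ is obtained, it contradicts $\chi(G)=k$, completing the proof.
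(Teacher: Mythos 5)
The paper does not prove this lemma (it is quoted from~\cite{Hoang2015}), so I will measure your proposal against the standard argument. That argument is much simpler than what you describe: assume $G$ is $k$-vertex-critical, take a proper $(k-1)$-colouring $c$ of $G-a_1$ (hence of the induced subgraph $G-A$), and then colour $A$ by setting $c(a_i):=c(b_i)$ for every $i$, leaving \emph{all other vertices, including $B$, untouched}. This is proper: the colours $c(b_1),\dots,c(b_m)$ are pairwise distinct because $B$ is a clique, so $A$ is properly coloured internally; and any neighbour $v$ of $a_i$ outside $A$ lies in $N(a_i)\setminus A\subseteq N(b_i)\setminus B$, so $v\sim b_i$ and hence $c(v)\neq c(b_i)$. (Note the hypothesis also forces $A$ to be anticomplete to $B$: if $b_j\in N(a_i)$ then $b_j\in N(a_i)\setminus A\subseteq N(b_i)\setminus B$, impossible since $b_j\in B$.) This yields a $(k-1)$-colouring of $G$, contradicting $\chi(G)=k$.

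Your proposal has the right overall shape (extend a $(k-1)$-colouring of a vertex-deleted subgraph using the neighbourhood containment), but the mechanism you propose — a simultaneous greedy recolouring of $A\cup B$ — is not carried out and does not work as described. First, $A\cup B$ is not a clique (as noted above, there are no edges between $A$ and $B$), so the ``$2m-1$ clique constraints'' you invoke are not there. Second, greedy colouring in the order $a_1,\dots,a_m,b_m,\dots,b_1$ can fail: after uncolouring $A\cup B$, the only colour guaranteed to be absent from $c(N(a_1)\setminus A)$ is the \emph{specific} colour $c(b_1)$, and a generic greedy choice at $a_1$ need not pick it; likewise, when you later recolour $b_i$, the only colour guaranteed available against $N(b_i)\setminus B$ is $b_i$'s original colour, which may by then be occupied by some $b_j$ with $j>i$. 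Your phrases ``one should set this up carefully'' and ``the subtlety is purely in verifying\dots'' mark exactly the step that is never established. The fix is to abandon the recolouring of $B$ altogether and use the direct assignment $a_i\mapsto c(b_i)$, for which properness is a two-line check.
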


\noindent We stated Lemma~\ref{lem:nocomparablecliques} here in its full generality for interested readers, but we will only use it for the case where $m=1$. For easier reference in this case, we call vertices $a$ and $b$ \textit{comparable} if $N(a)\subseteq N(b)$. The contrapositive of Lemma~\ref{lem:nocomparablecliques} for $m=1$, then can be restated as there are no comparable vertices in a vertex-critical graph.

\begin{theorem}[\cite{CameronHoangSawada2022}]\label{thm:finiteP3ellP1freecrit}
There are only finitely many $k$-vertex-critical $(P_3+\ell P_1)$-free graphs for all $k\ge 1$ and $\ell \ge 0$.
\end{theorem}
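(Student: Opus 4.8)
The plan is to establish the quantitative strengthening that every $k$-vertex-critical $(P_3+\ell P_1)$-free graph $G$ has at most $f(k,\ell)$ vertices; since there are only finitely many graphs of bounded order, this is equivalent to the statement. I would use the standard facts that $G$ is connected, $\delta(G)\ge k-1$, and (Lemma~\ref{lem:nocomparablecliques} with $m=1$) $G$ has no two comparable vertices, together with the easy remark that a $k$-vertex-critical graph containing $K_k$ must equal $K_k$ (deleting any vertex off the clique leaves a graph that still needs $k$ colours). So we may assume $G\ne K_k$, whence $\omega(G)\le k-1$; and since $\omega(G)\le k-1$, Ramsey's theorem reduces the problem to bounding $\alpha(G)$ in terms of $k$ and $\ell$.

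Fix a maximum independent set $I$ with $s=|I|$, put $W=V(G)\setminus I$, and assume $s$ is large (small $s$ is handled trivially). The engine of the argument is a collection of ``$P_3$ plus an anticomplete independent set'' observations. First, if some $w\in W$ has two neighbours $u,u'\in I$, then $\{u,w,u'\}$ induces a $P_3$ anticomplete to all of $I\setminus N(w)$, so $(P_3+\ell P_1)$-freeness forces $|N(w)\cap I|\ge s-\ell+1$; hence every vertex of $W$ has exactly one neighbour in $I$ or at least $s-\ell+1$ of them. Let $W_{\mathrm{high}}$ be the set of vertices of the second kind, let $W_{\mathrm{low}}=W\setminus W_{\mathrm{high}}$, and for $u\in I$ write $P_u=N(u)\cap W_{\mathrm{low}}$. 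Analogous arguments show that $W_{\mathrm{high}}$ is a clique (so $|W_{\mathrm{high}}|\le k-1$), that each $P_u$ is a clique, and that $P_u$ is anticomplete to $P_{u'}$ for $u\ne u'$. Thus $W_{\mathrm{low}}=\bigsqcup_{u\in I}P_u$ and the nonempty $P_u$ are precisely the components of $G[W_{\mathrm{low}}]$, each of order at most $k-2$.

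Let $I^{*}=\{u\in I: u\sim w\text{ for all }w\in W_{\mathrm{high}}\}$; since each vertex of $W_{\mathrm{high}}$ misses fewer than $\ell$ vertices of $I$, $|I\setminus I^{*}|\le (k-1)(\ell-1)$, so it suffices to bound $|I^{*}|$. For $u\in I^{*}$ we have $N(u)=W_{\mathrm{high}}\cup P_u$; as $\deg(u)\ge k-1$ and $\{u\}\cup W_{\mathrm{high}}$ is a clique (of order at most $k-1$), $P_u\ne\varnothing$, so $A_u:=\{u\}\cup P_u$ is a clique of order between $k-|W_{\mathrm{high}}|$ and $k-1$. Crucially, for every $v\in A_u$ the external neighbourhood $N(v)\setminus A_u$ lies entirely in $W_{\mathrm{high}}$ (the owner $u$ absorbs the only $I$-neighbour of each $w\in P_u$, and $P_u$ being a whole component of $G[W_{\mathrm{low}}]$ absorbs the $W_{\mathrm{low}}$-neighbours). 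Hence the ``profile'' of $A_u$ — the multiset of the sets $N(v)\setminus A_u$, $v\in A_u$ — is a multiset of at most $k-1$ subsets of the $(\le k-1)$-element set $W_{\mathrm{high}}$, and there are only $c(k)$ possibilities for it. The cliques $A_u$ $(u\in I^{*})$ are pairwise disjoint, so if two distinct $u,u'\in I^{*}$ with $|P_u|=|P_{u'}|$ had equal profiles, we could label $A_u,A_{u'}$ so that $N(a_i)\setminus A_u=N(b_i)\setminus A_{u'}$ for all $i$, and Lemma~\ref{lem:nocomparablecliques} — used now in full strength, with $m=|A_u|$ — would contradict $k$-vertex-criticality. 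Therefore $|I^{*}|\le (k-2)\,c(k)$, so $\alpha(G)=s$ is bounded in terms of $k$ and $\ell$, and Ramsey's theorem completes the argument.

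The step I expect to be the main obstacle is the structural analysis of the second paragraph: there is no a priori reason for $\alpha(G)$ to be bounded, and if the attachment of $W$ to $I$ were genuinely complicated no finite-type argument would apply. What makes it work is that $(P_3+\ell P_1)$-freeness is strong enough to force the attachment into the rigid shape ``one almost-universal bounded clique $W_{\mathrm{high}}$, plus pairwise non-adjacent private cliques $P_u$'', after which the general form of the comparable-cliques lemma applies directly. Pinning down the (linear-in-$\ell$, polynomial-in-$k$) thresholds in these $P_3$-arguments, and dispatching the cases where $s$ falls below them, is the routine but fiddly remainder.
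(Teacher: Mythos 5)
The paper offers no proof of this statement to compare against: Theorem~\ref{thm:finiteP3ellP1freecrit} is quoted from the cited reference and used as a black box, so your argument has to stand on its own. As far as I can check, it does. The reduction to bounding $\alpha(G)$ is legitimate ($G\ne K_k$ forces $\omega(G)\le k-1$, and Ramsey then bounds the order once $\alpha$ is bounded), and the structural core is sound: the ``$P_3$ plus anticomplete part of $I$'' observations do force every $w\in W$ to have either exactly one or at least $s-\ell+1$ neighbours in a maximum independent set $I$ (exhaustive because $I$ maximum gives each $w$ at least one $I$-neighbour), and they do force $W_{\mathrm{high}}$ to be a clique, each $P_u$ to be a clique, and distinct $P_u,P_{u'}$ to be anticomplete — all provided $s$ exceeds a threshold of roughly $3\ell$, which is exactly the regime you need since smaller $s$ already bounds $\alpha(G)$. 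The decisive step is also correct: for $u\in I^{*}$ the clique $\{u\}\cup W_{\mathrm{high}}$ gives $|W_{\mathrm{high}}|\le k-2$, hence $P_u\ne\varnothing$ and $A_u=\{u\}\cup P_u$ is a clique of size at most $k-1$ all of whose external neighbourhoods lie in the bounded set $W_{\mathrm{high}}$; equal multiset profiles yield the indexed containments $N(a_i)\setminus A_u\subseteq N(b_i)\setminus A_{u'}$ required by Lemma~\ref{lem:nocomparablecliques} with $m=|A_u|$, the cliques are genuinely disjoint, and the number of profiles depends only on $k$. So $|I^{*}|$, hence $\alpha(G)$, hence $|V(G)|$ is bounded by a function of $k$ and $\ell$. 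The deferred threshold bookkeeping is indeed routine.

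It is worth noting that your proof is philosophically very close to the present paper's own toolkit — the no-comparable-cliques lemma (used here in its full strength $m>1$, which the authors explicitly state they never need) combined with bounding the independence number and Ramsey — whereas the authors' whole strategy is to avoid reproving this theorem by reducing their families to the $(P_3+\ell P_1)$-free case. Whether your decomposition of $W$ into $W_{\mathrm{high}}$ and the private cliques $P_u$ matches the argument in the cited source cannot be judged from the text given, but as a self-contained proof it is convincing.
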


We also require two new technical lemmas that will be used multiple times throughout the paper as we apply Theorem~\ref{thm:finiteP3ellP1freecrit} to prove the finiteness of vertex-critical graphs in many families.

\begin{lemma}\label{lem:2P2freenonneighbconnected}
If $G$ is a $k$-vertex-critical $2P_2$-free graph, then for every nonuniversal vertex $v\in V(G)$,  $\noneighbs$ induces a connected graph with at least two vertices.
\end{lemma}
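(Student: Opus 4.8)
The plan is to reduce everything to a single ingredient: the $m=1$ case of Lemma~\ref{lem:nocomparablecliques}, which (as noted after its statement) says that a $k$-vertex-critical graph has no two comparable vertices. Fix a nonuniversal vertex $v$, so that $\noneighbs\neq\emptyset$.

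First I would show that $G[\noneighbs]$ has no isolated vertex. Suppose some $w\in\noneighbs$ had no neighbour in $\noneighbs$; then every neighbour of $w$ lies in $N[v]\setminus\{v\}=N(v)$ (using $w\nsim v$, which holds since $w\notin N[v]$), so $N(w)\subseteq N(v)$. Thus $w$ and $v$ would be comparable, contradicting vertex-criticality. Applying this to any $w\in\noneighbs$ produces a neighbour $w'\in\noneighbs$ with $w'\neq w$, so $|\noneighbs|\ge 2$; this settles the ``at least two vertices'' part.

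Next I would prove connectedness by contradiction. Suppose $G[\noneighbs]$ is disconnected, let $C$ be one of its components, and put $D=\noneighbs\setminus C\neq\emptyset$, so that $C$ is anticomplete to $D$. By the previous paragraph every vertex of $C$ has a neighbour in $\noneighbs$, and since $C$ is a component that neighbour lies in $C$; hence $C$ contains an edge $ww'$, and likewise $D$ contains an edge $uu'$. As $C\cap D=\emptyset$ the four vertices $w,w',u,u'$ are distinct, and since $C$ is anticomplete to $D$ there are no edges between $\{w,w'\}$ and $\{u,u'\}$. Therefore $\{w,w',u,u'\}$ induces a $2P_2$ in $G$, contradicting $2P_2$-freeness, and so $G[\noneighbs]$ is connected.

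I do not anticipate a genuine obstacle here: the whole argument is essentially the observation that ``no comparable vertices'' forbids isolated vertices in $\noneighbs$, after which a pair of independent edges lying in two different components of $\noneighbs$ is exactly a forbidden $2P_2$. The only points requiring care are purely bookkeeping --- that nonuniversality of $v$ really gives $\noneighbs\neq\emptyset$, and that the two edges produced in the disconnected case are vertex-disjoint (immediate from $C\cap D=\emptyset$). Notably, connectivity of $G$ itself is never needed.
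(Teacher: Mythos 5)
Your proof is correct and follows essentially the same route as the paper's: use the no-comparable-vertices consequence of Lemma~\ref{lem:nocomparablecliques} to rule out isolated vertices in $G[\noneighbs]$, then observe that two components would each contain an edge and hence yield an induced $2P_2$. Your write-up is slightly more explicit about why $|\noneighbs|\ge 2$, which the paper leaves implicit, but there is no substantive difference.
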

\begin{proof}
Let $G$ be a $k$-vertex-critical $2P_2$-free graph, $v\in V(G)$ be nonuniversal, and $H$ be the graph induced by $\noneighbs$. If $u\in \noneighbs$ such that $u$ is an isolated vertex in the graph induced by $H$, then $N(u)\subseteq N(v)$ contradicting $G$ being $k$-vertex-critical by Lemma~\ref{lem:nocomparablecliques}. Therefore, if $H$ has at least two components, then each component has at least one edge and therefore taking an edge from each component induces a $2P_2$. This contradicts $G$ being $2P_2$-free. 
\end{proof}

\begin{lemma}\label{lem:2P2freeneighboursofS}
Let $G$ be a $2P_2$-free graph that contains an induced $P_3+\ell P_1$ for some $\ell\ge 1$. Let $S\cup\{v_1,v_2,v_3\}\subseteq V(G)$ induce a $P_3+\ell P_1$ where $v_1v_2v_3$ is the induced $P_3$ and $S$ contains the vertices in the $\ell P_1$. Then for ever vertex $u\in \overline{N[v_2]}$ such that $u$ has a neighbour in $S$, $u$ is complete to $\{v_1,v_3\}$. 
\end{lemma}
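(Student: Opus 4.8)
The plan is to prove this directly by contradiction, producing a forbidden induced $2P_2$ whenever $u$ fails to be adjacent to one of $v_1$ or $v_3$. Fix $u\in\overline{N[v_2]}$ having a neighbour $s\in S$ (such an $s$ exists precisely because $\ell\ge 1$, so $S\neq\emptyset$). First I would record the non-adjacencies that come for free from the hypothesis that $S\cup\{v_1,v_2,v_3\}$ induces $P_3+\ell P_1$ with $P_3=v_1v_2v_3$: since $S$ is anticomplete to $\{v_1,v_2,v_3\}$ we have $s\nsim v_1$, $s\nsim v_2$, $s\nsim v_3$, and since $u\in\overline{N[v_2]}$ we have $u\nsim v_2$. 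I would also note the needed distinctness: $u\neq v_2$ together with $u\nsim v_2$ rules out $u\in\{v_1,v_2,v_3\}$, and $u$ having a neighbour in the independent set $S$ forces $u\notin S$, hence $u\neq s$; combined with $s\notin\{v_1,v_2,v_3\}$ and $v_1\neq v_2$, $v_2\neq v_3$, this makes the relevant $4$-element sets genuinely have four distinct vertices.

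Next, suppose toward a contradiction that $u\nsim v_1$. Consider $\{u,s,v_1,v_2\}$: the pair $\{u,s\}$ is an edge, the pair $\{v_1,v_2\}$ is an edge, and every cross pair is a non-edge, namely $u\nsim v_1$ (assumed), $u\nsim v_2$, $s\nsim v_1$, $s\nsim v_2$. Hence these four vertices induce a $2P_2$, contradicting that $G$ is $2P_2$-free; therefore $u\sim v_1$. The identical argument with the edge $v_2v_3$ in place of $v_1v_2$ handles $v_3$: if $u\nsim v_3$, then $\{u,s,v_2,v_3\}$ induces a $2P_2$ because $u\nsim v_2$, $u\nsim v_3$, $s\nsim v_2$, $s\nsim v_3$, again a contradiction, so $u\sim v_3$. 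Thus $u$ is complete to $\{v_1,v_3\}$.

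I do not expect a genuine obstacle here: all the content is in the careful bookkeeping of non-adjacencies, and the only subtlety is ensuring the chosen four vertices are pairwise distinct so that the induced subgraph is really a $2P_2$ rather than a degenerate configuration — which is exactly what the distinctness observations in the first paragraph take care of. Note that this lemma, like Lemma~\ref{lem:2P2freenonneighbconnected}, uses nothing about $G$ beyond $2P_2$-freeness, so the argument is purely structural.
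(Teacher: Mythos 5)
Your proof is correct and follows exactly the paper's argument: for $i\in\{1,3\}$, if $u\nsim v_i$ then $\{s,u,v_i,v_2\}$ induces a $2P_2$ using the edges $us$ and $v_iv_2$. The extra bookkeeping on distinctness is fine but not substantively different from what the paper does.
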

\begin{proof}
Let $s\in S$ and $u\in \overline{N[v_2]}$ with $u\sim s$. Let $i\in \{1,3\}$. If $u\nsim v_i$, then $\{s,u,v_i,v_2\}$ induces a $2P_2$ in $G$, a contradiction. Thus, $u$ is complete to $\{v_1,v_3\}$.
\end{proof}



Finally, there are special families of graphs that will be used in our results. For $\ell\ge 1$ and $m\ge 3$, let the $(m,\ell)$-$squid$ be the graph obtained from $C_m$ by attaching $\ell$ leaves to one of its vertices. Figure~\ref{fig:mellsquid} shows these graphs for $m=3,4$, which are the ones of interest to us, since for $m>4$ $(m,\ell)$-$squid$ is not $2P_2$-free.
\begin{center}
\begin{figure}[h]
\centering
\qquad
\subfigure[$m=3$.]{
\def\r{1.5}
\begin{tikzpicture}
\begin{scope}[every node/.style={circle,fill,draw}]
    \node[label=above:$u_2$,draw] (u2) at (-1*\r,0*\r) {};
    \node[label=above:$u_3$,draw] (u3) at (1*\r,0*\r) {};
    \node[label=above:$u_1$,draw] (u4) at (0*\r,-1*\r) {};
    \node[label=below:$w_1$,draw] (w1) at (-1*\r,-2*\r) {};
    \node[label=below:$w_2$,draw] (w2) at (-0.5*\r,-2*\r) {};  
    \node[label=below:$w_{\ell}$,draw] (well) at (1*\r,-2*\r) {};    
\end{scope}

\begin{scope}

    \path [-] (u4) edge node {} (w1);
    \path [-] (u4) edge node {} (w2);
    \path [-] (u4) edge node {} (well);
    
    \path [-] (u4) edge node {} (u2);
    \path [-] (u4) edge node {} (u3);
    
    \path [-] (u2) edge node {} (u3);

\end{scope}

\path (w2) -- node[auto=false]{\ldots} (well);
\end{tikzpicture}
\label{subfig:3squid}
}
\qquad
\subfigure[$m=4$.]{\def\r{1.5}
\centering
\begin{tikzpicture}
\begin{scope}[every node/.style={circle,fill,draw}]
    \node[label=above:$u_1$,draw] (u1) at (0*\r,1*\r) {};
    \node[label=above:$u_2$,draw] (u2) at (-1*\r,0*\r) {};
    \node[label=above:$u_3$,draw] (u3) at (1*\r,0*\r) {};
    \node[label=above:$u_4$,draw] (u4) at (0*\r,-1*\r) {};
    \node[label=below:$w_1$,draw] (w1) at (-1*\r,-2*\r) {};
    \node[label=below:$w_2$,draw] (w2) at (-0.5*\r,-2*\r) {};  
    \node[label=below:$w_{\ell}$,draw] (well) at (1*\r,-2*\r) {};    
\end{scope}

\begin{scope}

    \path [-] (u4) edge node {} (w1);
    \path [-] (u4) edge node {} (w2);
    \path [-] (u4) edge node {} (well);
    
    \path [-] (u4) edge node {} (u2);
    \path [-] (u4) edge node {} (u3);
    
    \path [-] (u1) edge node {} (u2);
    \path [-] (u1) edge node {} (u3);
    
\end{scope}

\path (w2) -- node[auto=false]{\ldots} (well);

\end{tikzpicture}
\label{subfig:4squid}
}
\caption{The general form of the $(m,\ell)$-$squid$ graphs for $m=3$ and $m=4$.}\label{fig:mellsquid}
\end{figure}
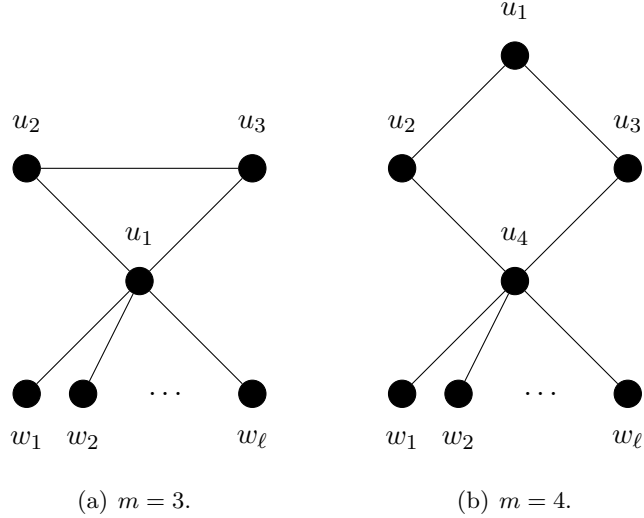
\end{center}

\section{($2P_2$, $bull$)-free}\label{sec:bull}
Let the $bull$ be the graph obtained from a $K_3$ by attaching two leaves, each to a different vertex of the $K_3$ and shown in Figure~\ref{fig:bull}.

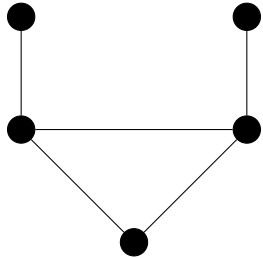
\begin{figure}[h]
\def\r{1.5}
\centering
\begin{tikzpicture}
\begin{scope}[every node/.style={circle,fill,draw}]
    \node (u1) at (-1*\r,1*\r) {};
    \node (u2) at (-1*\r,0*\r) {};
    \node (u3) at (1*\r,0*\r) {};
    \node (u4) at (0*\r,-1*\r) {};
    \node (w1) at (1*\r,1*\r) {};   
\end{scope}

\begin{scope}

    \path [-] (u4) edge node {} (u2);
    \path [-] (u4) edge node {} (u3);
    \path [-] (u2) edge node {} (u3);
    
    \path [-] (u1) edge node {} (u2);
    \path [-] (w1) edge node {} (u3);
    
\end{scope}

\end{tikzpicture}
\caption{The $bull$ graph.}\label{fig:bull}
\end{figure}

\begin{lemma}\label{lem:2P2bullP3P1fee}
Let $k\ge 1$. If $G$ is $k$-vertex-critical $(2P_2,bull)$-free, then $G$ is $(P_3+ P_1)$-free.
\end{lemma}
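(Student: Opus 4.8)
The plan is to argue by contradiction: suppose $G$ is $k$-vertex-critical, $(2P_2,bull)$-free, yet contains an induced $P_3+P_1$, say on vertices $\{v_1,v_2,v_3\}$ (the $P_3$, with $v_2$ the centre) together with an isolated vertex $s$. The key object to analyze is $\overline{N[v_2]}$, the set of nonneighbours of $v_2$; note $s$ lies there, so $v_2$ is nonuniversal, and by Lemma~\ref{lem:2P2freenonneighbconnected} this set induces a connected graph with at least two vertices. The strategy is to show that this forces a $bull$ (or a $2P_2$) somewhere, contradicting the hypotheses.

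First I would apply Lemma~\ref{lem:2P2freeneighboursofS} with $S=\{s\}$: any vertex $u\in\overline{N[v_2]}$ that is adjacent to $s$ must be complete to $\{v_1,v_3\}$. In particular $s$ itself, being in $\overline{N[v_2]}$ (it is nonadjacent to $v_2$) and trivially "adjacent to a neighbour in $S$"—wait, more carefully: $s$ need not be adjacent to itself, so I would instead note that since $\overline{N[v_2]}$ is connected with $\geq 2$ vertices, $s$ has a neighbour $u$ in $\overline{N[v_2]}$, and then $u$ is complete to $\{v_1,v_3\}$. Now consider the edge $su$ together with $v_1,v_3$: we have $u\sim v_1$, $u\sim v_3$, $s\sim u$, and $s\nsim v_1$, $s\nsim v_3$ (since $s$ is isolated in the $P_3+P_1$, it is nonadjacent to all of $v_1,v_2,v_3$). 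Whether $v_1\sim v_3$ or not matters: if $v_1\nsim v_3$, then $\{s,u,v_1,v_3\}$ — with edges $su, uv_1, uv_3$ only — is a $claw$, not immediately a contradiction; but if $v_1\sim v_3$ then $v_1 v_3$ is impossible because $v_1v_2v_3$ is an induced $P_3$, so indeed $v_1\nsim v_3$. So I need to push further: bring in $v_2$. We have $v_2\sim v_1$, $v_2\sim v_3$, $v_2\nsim u$ (as $u\in\overline{N[v_2]}$), $v_2\nsim s$. Then $\{s,u,v_1,v_2\}$: edges $su$, $uv_1$, $v_1v_2$ — that's a $P_4$; adding $v_3$ with edges $uv_3$, $v_2v_3$ gives the vertex set $\{s,u,v_1,v_2,v_3\}$ with edges $su, uv_1, uv_3, v_1v_2, v_2v_3$. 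This is precisely a $bull$: the triangle is $\{u,v_1,v_2\}$? No — $u\sim v_1$, $v_1\sim v_2$, but $u\nsim v_2$, so no triangle. Let me recount: edges among these five are $su$, $uv_1$, $uv_3$, $v_1v_2$, $v_2v_3$, and that is a $5$-cycle $s$–$u$–$v_1$–$v_2$–$v_3$–? no, $v_3\nsim s$, so it is a $P_5$: $s,u,v_1,v_2,v_3$ in that order with $u$ also adjacent to $v_3$, i.e. a $P_5$ plus the chord $uv_3$, which is a "house"/$\overline{P_5}$-ish graph — that contains an induced... hmm, $\{u,v_1,v_2,v_3\}$ with edges $uv_1,uv_3,v_1v_2,v_2v_3$ is a $C_4$, hence an induced $2P_2$? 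No, $C_4$ has no induced $2P_2$. I think the clean contradiction instead comes from choosing the analysis differently, so the real plan is below.

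The cleaner approach: among the (at least two) vertices of the connected set $\overline{N[v_2]}$, at least one vertex $x$ is adjacent to $v_1$ or to $v_3$ — because otherwise $\overline{N[v_2]}$ would be anticomplete to $\{v_1,v_3\}$, and then taking any edge $xy$ inside $\overline{N[v_2]}$ (which exists since that set is connected with $\geq2$ vertices) together with the edge $v_1v_2$ yields an induced $2P_2$, contradiction. So fix such an $x$, WLOG $x\sim v_1$; since $v_1v_2v_3$ is induced $P_3$ and $x\nsim v_2$, the triple $\{x,v_1,v_3\}$: if $x\nsim v_3$ then $\{x,v_1,v_2,v_3\}$ has edges $xv_1, v_1v_2, v_2v_3$, a $P_4$, and adjoining a neighbour $z$ of $x$ inside $\overline{N[v_2]}$ (exists by connectivity, $z\nsim v_2$) gives... again I must be careful about $z\sim v_1$, $z\sim v_3$. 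The point I expect to actually carry the argument: use the vertex $s$ (or whichever vertex of the $\ell P_1$ part — here $\ell=1$) as the far end of a $2P_2$ against an edge inside a neighbourhood, combined with Lemma~\ref{lem:2P2freeneighboursofS} which pins down the neighbourhoods of the relevant vertices in $\overline{N[v_2]}$, forcing three of them together with $v_1$ and $v_3$ (or $v_2$) to span a $bull$. The main obstacle, and where I would spend the most care, is the case analysis on the adjacencies between vertices of $\overline{N[v_2]}$ and the set $\{v_1,v_3\}$: one must handle the vertex adjacent to $s$, show it sees both $v_1$ and $v_3$, then find a second vertex of $\overline{N[v_2]}$ on a path to it and track whether that second vertex sees $v_1,v_3$ — in every branch either a $2P_2$ (using $s$ and an internal edge) or a $bull$ (using the triangle structure at $v_2$ with the two pendant-like attachments from $\overline{N[v_2]}$) must appear. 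I would organize this as two or three short sub-cases and expect each to close in a line or two once the right four or five vertices are named.
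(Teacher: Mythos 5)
There is a genuine gap: your proposal never actually produces a forbidden induced subgraph or a violation of vertex-criticality. Your first line of attack is carried out honestly and you correctly discover that it fails --- the neighbour $u$ of $s$ inside $\overline{N[v_2]}$ is complete to $\{v_1,v_3\}$ by Lemma~\ref{lem:2P2freeneighboursofS}, and the resulting configuration on $\{s,u,v_1,v_2,v_3\}$ is a $banner$ (a $C_4$ plus a pendant vertex), which is neither a $2P_2$ nor a $bull$. Since the $banner$ is itself a $2P_2$-free, $bull$-free graph containing an induced $P_3+P_1$, no amount of further inspection of that five-vertex configuration can close the argument; criticality has to enter in a specific way. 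Your ``cleaner approach'' then defers exactly the part that constitutes the proof: you never say which structural conclusion the case analysis is driving toward, and the claim that ``in every branch either a $2P_2$ or a $bull$ must appear'' is asserted rather than demonstrated.

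The missing idea is the target of the contradiction. The paper's proof shows that $v_1$ and $v_3$ are \emph{comparable}, contradicting Lemma~\ref{lem:nocomparablecliques}, and the work happens inside $N(v_2)$ and among private neighbours of $v_1$ and $v_3$ --- not inside $\overline{N[v_2]}$, where your analysis is concentrated. Concretely: $bull$-freeness forces $s$ to have no neighbour in $(N(v_1)\cap N(v_2))\setminus N(v_3)$ (such a vertex $n$ would make $\{s,n,v_1,v_2,v_3\}$ a $bull$ with triangle $\{n,v_1,v_2\}$), and symmetrically with the roles of $v_1$ and $v_3$ swapped. Since $s$ and $v_1$ cannot be comparable, $s$ has a neighbour $u_1$ with $u_1\nsim v_1$; Lemma~\ref{lem:2P2freeneighboursofS} forces $u_1\in N(v_2)$, and the symmetric version of the first claim then forces $u_1\nsim v_3$ as well. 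If $v_1$ and $v_3$ were incomparable they would have private neighbours $v_1'\nsim v_3$ and $v_3'\nsim v_1$; $2P_2$-freeness forces $v_1'\sim v_3'$ and forces $u_1$ to be complete to $\{v_1',v_3'\}$ (playing the edge $su_1$ against $v_1v_1'$ and against $v_3v_3'$), and then $\{u_1,v_1',v_3',v_1,v_3\}$ induces a $bull$. None of these steps appears in your proposal, so as written it is a plan rather than a proof.
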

\begin{proof}
Let $G$ be a $k$-vertex-critical $(2P_2,bull)$-free graph and by way of contradiction let $\{v_1,v_2,v_3,s_1\}$ induce a $P_3+P_1$ in $G$ where $\{v_1,v_2,v_3\}$ induces the $P_3$, in that order, of the $P_3+P_1$. We will show that $v_1$ and $v_3$ are comparable which will contradict Lemma~\ref{lem:nocomparablecliques}.

We first show that $s_1$ has no neighbours in $(N(v_1)\cap N(v_2))-N(v_3)$  (and by symmetry no neighbours in $(N(v_3)\cap N(v_2))-N(v_1)$). Suppose $n\in (N(v_1)\cap N(v_2))-N(v_3)$ such that $s_1\sim n$. Now, $\{s_1,v_1,v_2,v_3,n\}$ induces a $bull$ in $G$, a contradiction. 

Since $s_1$ is not comparable with $v_1$, then it must have a neighbour $u_1$ such that $u_1\nsim v_1$. By Lemma~\ref{lem:2P2freeneighboursofS}, $u_1\not\in \overline{N(v_2)}$ (or else $u_1\sim v_1$) and therefore $u_1\sim v_2$. Now, since $s_1$ has no neighbours in $(N(v_3)\cap N(v_2))-N(v_1)$ as argued above, it must be that $u_1\nsim v_3$. 

If $v_1$ and $v_3$ are not comparable, then they must have distinct neighbours. Let $v_1'$ be such that $v_1\sim v_1'$ and $v_1'\nsim v_3$ and $v_3'$ be such that $v_3\sim v_3'$ and $v_3'\nsim v_1$. Note that we must have $v_1'\sim v_3'$, otherwise $\{v_1,v_1',v_3,v_3'\}$ induces a $2P_2$ in $G$.
Further $u_1$ must be complete to $\{v_1',v_3'\}$, else $\{s_1,u_1,v_1,v_1'\}$ or $\{s_1,u_1,v_3,v_3'\}$ will induce a $2P_2$ in $G$. But now, $\{u_1,v_1',v_3',v_1,v_3\}$ induces a $bull$ in $G$, a contradiction. Figure~\ref{fig:bullproofillustration} illustrates the induced $bull$ in $G$	q, although the reader should note that while all adjacencies and nonadjacencies are complete within the set $\{u_1,v_1',v_3',v_1,v_3\}$, there could be other missing adjacencies in the figure (for example, we might have $v_1'\sim v_2$). Thus, $v_1$ and $v_3$ are comparable, contradicting $G$ being $k$-vertex-critical.



\end{proof}

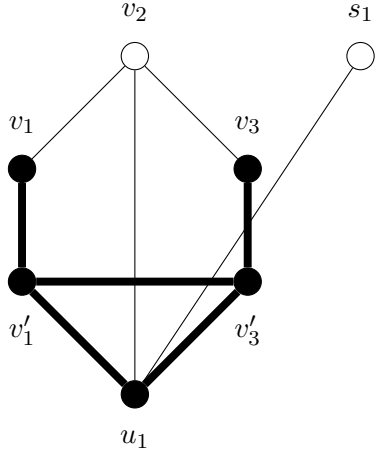
\begin{figure}[h]
\def\r{1.5}
\centering
\begin{tikzpicture}
\begin{scope}[every node/.style={circle,draw}]
   \node[label=above:$v_2$,draw] (v2) at (0*\r,1*\r) {};
   \node[label=above:$s_{1}$,draw] (s1) at (2*\r,1*\r) {};    
\end{scope}
\begin{scope}[every node/.style={circle,fill,draw}]
	\node[label=above:$v_1$,draw] (v1) at (-1*\r,0*\r) {};
 	\node[label=above:$v_3$,draw] (v3) at (1*\r,0*\r) {};
	
     \node[label=below:$u_1$,draw] (u1) at (0*\r,-2*\r) {};  
     \node[label=below:$v_1'$,draw] (v1p) at (-1*\r,-1*\r) {};
     \node[label=below:$v_3'$,draw] (v3p) at (1*\r,-1*\r) {};    
\end{scope}

\begin{scope}[line width=3pt]
	\path [-] (v1) edge node {} (v1p);
	\path [-] (v3) edge node {} (v3p);
	\path [-] (v1p) edge node {} (v3p);
    \path [-] (u1) edge node {} (v3p);
	\path [-] (u1) edge node {} (v1p);

\end{scope}

\begin{scope}
    \path [-] (v1) edge node {} (v2);
	\path [-] (u1) edge node {} (s1);
	\path [-] (v2) edge node {} (v3);
	\path [-] (u1) edge node {} (v2);
  
\end{scope}


 \end{tikzpicture}
\caption{An illustration of part of the proof of Lemma~\ref{lem:2P2bullP3P1fee} with the induced $bull$ in bold.}\label{fig:bullproofillustration}
\end{figure}

The following theorem follows directly from Lemma~\ref{lem:2P2bullP3P1fee} and Theorem~\ref{thm:finiteP3ellP1freecrit}.

\begin{theorem}
There are only finitely many $k$-vertex-critical $(2P_2,bull)$-free graphs for all $k\ge 1$.
\end{theorem}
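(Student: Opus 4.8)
The plan is to deduce the statement immediately from the two results already in hand, exactly along the lines the paper advertises in its introduction. Specifically, Lemma~\ref{lem:2P2bullP3P1fee} tells us that every $k$-vertex-critical $(2P_2,bull)$-free graph $G$ is in fact $(P_3+P_1)$-free, i.e., $(P_3+\ell P_1)$-free with $\ell=1$. Theorem~\ref{thm:finiteP3ellP1freecrit} says that for each fixed $k\ge 1$ and each fixed $\ell\ge 0$ there are only finitely many $k$-vertex-critical $(P_3+\ell P_1)$-free graphs. Combining these, for each fixed $k$ the class of $k$-vertex-critical $(2P_2,bull)$-free graphs is a subclass of the class of $k$-vertex-critical $(P_3+P_1)$-free graphs, and the latter is finite; hence so is the former.

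First I would fix an arbitrary $k\ge 1$. Then I would invoke Lemma~\ref{lem:2P2bullP3P1fee} to conclude that every $k$-vertex-critical $(2P_2,bull)$-free graph $G$ is $(P_3+P_1)$-free; note that such a $G$ is in particular $(P_3+P_1)$-free \emph{and} $k$-vertex-critical, so it belongs to the family whose finiteness is asserted by Theorem~\ref{thm:finiteP3ellP1freecrit} applied with $\ell=1$. Therefore the set of $k$-vertex-critical $(2P_2,bull)$-free graphs injects (as a subset, up to isomorphism) into the finite set of $k$-vertex-critical $(P_3+P_1)$-free graphs, and is consequently itself finite. Since $k$ was arbitrary, the conclusion holds for all $k\ge 1$.

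There is essentially no obstacle here: all the work has been front-loaded into Lemma~\ref{lem:2P2bullP3P1fee}, whose proof is the genuine content. The only things to be mildly careful about are that the quantifier order in Theorem~\ref{thm:finiteP3ellP1freecrit} is ``for all $k$ and $\ell$, finitely many'' (so we really do get a per-$k$ bound, which is what we want), and that a subclass of a finite class of graphs is finite. No Ramsey-type argument, no appeal to the Strong Perfect Graph Theorem, and no further structural analysis is needed — which is precisely the ``short and easy-to-follow'' flavour the introduction promises. I would write the proof as a single sentence, as the paper already does, citing Lemma~\ref{lem:2P2bullP3P1fee} and Theorem~\ref{thm:finiteP3ellP1freecrit}.
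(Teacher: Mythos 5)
Your proposal is correct and matches the paper exactly: the theorem is stated there as following directly from Lemma~\ref{lem:2P2bullP3P1fee} and Theorem~\ref{thm:finiteP3ellP1freecrit}. Your added care about the quantifier order and the subclass argument is sound but not needed beyond the one-line deduction.
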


\section{($2P_2$, \squid{\ell})-free}\label{sec:ellsquid}
Recall that the \squid{\ell}  is the graph obtained from a $C_4$ by adding $\ell$ leaves to one vertex, see Figure~\ref{subfig:4squid}.
%
%
%
%
%
%
%
%
%
%
%

\begin{lemma}\label{lem:2P2ellsquidP3cP1fee}
Let $\ell,k\ge 1$ and $c=(\ell-1)(k-1)+1$. If $G$ is a $k$-vertex-critical $(2P_2$, \squid{\ell}$)$-free graph, then $G$ is $(P_3+ cP_1)$-free.
\end{lemma}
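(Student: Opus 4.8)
The plan is to mirror the structure of the proof of Lemma~\ref{lem:2P2bullP3P1fee}, but with the forbidden $(4,\ell)$-$squid$ playing the role that the $bull$ played there, and with a pigeonhole argument to absorb the extra $(\ell-1)(k-1)$ copies of $P_1$. Assume for contradiction that $\{v_1,v_2,v_3\}\cup S$ induces a $P_3+cP_1$, where $v_1v_2v_3$ is the $P_3$ and $|S|=c$. The goal is once again to show that $v_1$ and $v_3$ are comparable, contradicting Lemma~\ref{lem:nocomparablecliques}.

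First I would analyze where vertices of $S$ can have neighbours. By Lemma~\ref{lem:2P2freeneighboursofS}, any $u\in\overline{N[v_2]}$ with a neighbour in $S$ is complete to $\{v_1,v_3\}$. The key new geometric fact to extract is: if $s\in S$ has a neighbour $u\in\overline{N[v_2]}$, then $\{u,v_1,v_2,v_3,s,\dots\}$ together with $\ell-1$ further leaves at $u$ would threaten to form a $(4,\ell)$-$squid$ (the $C_4$ being $u\,v_1\,v_2\,v_3\,u$ with $u$ the vertex carrying the leaves, $s$ being one leaf). So $u$ can have at most $\ell-1$ neighbours among any independent set of non-neighbours of $v_1$ and $v_3$ that also avoids $v_2$ — in particular, $u$ has at most $\ell-1$ neighbours in $S$ (after accounting for the structure forced on $N[u]$). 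Being careful about exactly which vertices play the leaf roles will be the fiddly part: a leaf of the squid must be nonadjacent to $v_2$ (the $C_4$ vertex) and to the two neighbours of $v_2$ on the cycle, which here are $v_1$ and $v_3$; and since $S$ is independent and anticomplete to $\{v_1,v_2,v_3\}$, the members of $S$ adjacent to $u$ are exactly the right candidates.

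Next comes the pigeonhole step. Since $v_1$ is not comparable with each $s\in S$... actually the cleaner route: since $G$ is $k$-vertex-critical, fix a proper $(k-1)$-colouring of $G-v_2$ (wait, $v_2$ need not be special) — better, fix a proper $(k-1)$-colouring of $G-s_0$ for some fixed $s_0\in S$, or just use that any vertex $u$ of $G$ lies in a $(k-1)$-colourable induced subgraph. The intended mechanism is: each $s\in S$ that is not comparable with $v_1$ has a private neighbour $u_s$ with $u_s\nsim v_1$; as in the bull proof, $u_s\sim v_2$ and $u_s\nsim v_3$. The set of such $u_s$ has bounded chromatic behaviour, and each individual $u_s$ can absorb at most $\ell-1$ of the vertices of $S$ as neighbours. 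Since $|S|=c=(\ell-1)(k-1)+1$, we cannot cover $S$ by $k-1$ vertices each hitting $\le \ell-1$ of them, so either some $s\in S$ is left with no such private neighbour — meaning $s$ is comparable with $v_1$, contradiction — or we directly build the squid. The cleanest packaging is probably: show the vertices $\{v_1\}\cup\{\text{relevant neighbours}\}$ can be $(k-1)$-coloured and run pigeonhole on colour classes so that some colour class contains $\ell$ vertices of $S$ all adjacent to a common $u$, yielding the $(4,\ell)$-$squid$.

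I expect the main obstacle to be making the pigeonhole bookkeeping precise: one must argue that the ``private neighbours'' $u_s$ can be chosen to lie in few colour classes (so that the count $(\ell-1)(k-1)+1$ is exactly what forces a collision), and simultaneously that whenever $\ell$ vertices of $S$ share a neighbour $u$ with $u\sim v_2$, $u\nsim v_1$, $u\nsim v_3$, the induced subgraph on $\{u,v_1,v_2,v_3\}$ plus those $\ell$ vertices really is an induced $(4,\ell)$-$squid$ and not something with extra edges — this uses independence of $S$, that $S$ is anticomplete to the $P_3$, and that $v_1\nsim v_3$ (from the $P_3$ being induced), plus Lemma~\ref{lem:2P2freeneighboursofS} to rule out a non-neighbour of $v_2$ sneaking in. Once both halves are nailed down, the contradiction gives comparability of $v_1$ and $v_3$ (after handling the symmetric role of $v_3$), and Lemma~\ref{lem:nocomparablecliques} finishes the proof. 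The value $c=(\ell-1)(k-1)+1$ strongly suggests this is exactly the intended count, with $k-1$ coming from a colouring of $G$ minus a vertex and $\ell-1$ from the squid's leaf capacity.
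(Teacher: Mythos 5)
Your first half matches the paper's proof: by Lemma~\ref{lem:2P2freeneighboursofS} every vertex $u$ of $\overline{N[v_2]}$ with a neighbour in $S$ is complete to $\{v_1,v_3\}$, and such a $u$ can then see at most $\ell-1$ vertices of $S$, since otherwise the $C_4$ on $u,v_1,v_2,v_3$ together with $\ell$ of $u$'s neighbours in $S$ induces a \squid{\ell}. But the second half of your argument has a genuine gap, and in one place is simply wrong. The configuration you propose to finish with --- $\ell$ vertices of $S$ sharing a common neighbour $u$ with $u\sim v_2$, $u\nsim v_1$, $u\nsim v_3$ --- does \emph{not} induce a \squid{\ell}: that graph is a tree ($v_2$ adjacent to $v_1,v_3,u$, with the $\ell$ leaves hanging off $u$), and contains no $C_4$. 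The leaf-capacity bound of $\ell-1$ applies only to vertices \emph{nonadjacent} to $v_2$ and complete to $\{v_1,v_3\}$; the ``private neighbours'' $u_s\sim v_2$ that you import from the bull proof satisfy neither condition, and nothing bounds how many vertices of $S$ such a $u_s$ can absorb. (Moreover, the step $u_s\nsim v_3$ in the bull proof used bull-freeness of $\{s,u_s,v_1,v_2,v_3\}$, which is a bull rather than a squid, so it does not transfer.) Your colouring-based pigeonhole is never made precise enough to evaluate, but as stated it rests on this false squid construction.

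The paper's actual mechanism is one you never reach: it stays entirely inside $\overline{N[v_2]}$ and does not attempt to show $v_1$ and $v_3$ comparable. Each $s\in S$ has a neighbour in $\overline{N[v_2]}$ (Lemma~\ref{lem:2P2freenonneighbconnected} combined with Lemma~\ref{lem:nocomparablecliques}), each such neighbour covers at most $\ell-1$ vertices of $S$, so a covering set $U\subseteq N(S)\cap\overline{N[v_2]}$ chosen with pairwise incomparable traces on $S$ has $|U|\ge k$ by pigeonhole on $|S|=(\ell-1)(k-1)+1$. Incomparability gives each pair $u_i,u_j\in U$ private neighbours $s_i,s_j\in S$, and $2P_2$-freeness of $\{s_i,u_i,s_j,u_j\}$ forces $u_i\sim u_j$; hence $U$ is a clique of size at least $k$ avoiding $v_1$, i.e.\ a $K_k$ inside a proper induced subgraph, contradicting $k$-vertex-criticality. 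This clique step is the engine of the proof and the true source of the factor $k-1$ in $c$; without it, or a correct substitute, your argument does not close.
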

\begin{proof}
Let $G$ be a $k$-vertex-critical ($2P_2$, \squid{\ell})-free graph for some $k,\ell\ge 1$ and let $c=(\ell-1)(k-1)+1$. Suppose by way of contradiction that $G$ contains an induced $P_3+c P_1$ with $\{v_1,v_2,v_3\}$ inducing the $P_3$ in that order and $S=\{s_1,s_2\dots, s_c\}$ the $c P_1$ of the induced $P_3+c P_1$. 

By Lemma~\ref{lem:2P2freenonneighbconnected}, each $s_i$ must have a neighbour in $\overline{N[v_2]}$. Further, by Lemma~\ref{lem:2P2freeneighboursofS}, for every $u\in \overline{N[v_2]}-S$, such that $u\sim s_i$ for some $s_i\in S$, we must have that $u$ is complete to $\{v_1,v_3\}$. Therefore, each $u\in \overline{N[v_2]}-S$  has at most $\ell-1$ neighbours in $S$, else $u,v_1,v_2,v_3$ together with any $\ell$ of $u$'s neighbours in $S$ would induce an \squid{\ell}. Let $U=\{u_1,u_2,\dots u_m\}$ be a subset of $N(S)\cap \overline{N[v_2]}$ such that $\left(\bigcup_{i=1}^{m} N(u_i)\right)\cap S = S$ and such that $N(u_i)\cap S\not\subseteq N(u_j)\cap S$ for all $i\neq j$. Such a set $U$ exists since each vertex in $S$ has at least one neighbour in $\overline{N[v_2]}$. 
Since each $u_i$ can have at most $\ell-1$ neighbours in $S$ we must have that $|U|\ge k$ by the Pigeonhole Principle. Let $u_i,u_j\in U$ for $i\neq j$ and without loss of generality let $s_i\in N(u_i)-N(u_j)$ and $s_j\in N(u_j)-N(u_i)$. If $u_i\nsim u_j$, then $\{s_i,u_i,s_j,u_j\}$ induces a $2P_2$ in $G$, a contradiction. Therefore, $u_i\sim u_j$ for all $i\neq j$ and therefore $U$ induces a clique with at least $k$ vertices in $G$. But now $U$ induces a proper subgraph of $G$ that requires at least $k$ colours, which contradicts $G$ being $k$-vertex-critical. Therefore, $G$ must be $(P_3+ cP_1)$-free.
\end{proof}

The following theorem follows directly from Lemma~\ref{lem:2P2ellsquidP3cP1fee} and Theorem~\ref{thm:finiteP3ellP1freecrit}.

\begin{theorem}\label{thm:2P2ellsquidcrit}
There are only finitely many $k$-vertex-critical $(2P_2$, \squid{\ell}$)$-free graphs for all $k,\ell\ge 1$.
\end{theorem}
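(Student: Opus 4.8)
The plan is to obtain the theorem as an immediate consequence of the structural reduction in Lemma~\ref{lem:2P2ellsquidP3cP1fee} together with the finiteness result Theorem~\ref{thm:finiteP3ellP1freecrit}. First I would fix arbitrary integers $k,\ell\ge 1$ and set the constant $c=(\ell-1)(k-1)+1$. The key observation is that, with $k$ and $\ell$ fixed, $c$ is a fixed positive integer (indeed $c\ge 1$, since $(\ell-1)(k-1)\ge 0$), so it is a legitimate value to pass to Theorem~\ref{thm:finiteP3ellP1freecrit}.

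Next, by Lemma~\ref{lem:2P2ellsquidP3cP1fee}, every $k$-vertex-critical $(2P_2$, \squid{\ell}$)$-free graph is $(P_3+cP_1)$-free. Consequently, each such graph lies in the class of $k$-vertex-critical $(P_3+cP_1)$-free graphs. Theorem~\ref{thm:finiteP3ellP1freecrit}, applied with $c$ in the role of its $\ell$, tells us this latter class is finite. Since the $k$-vertex-critical $(2P_2$, \squid{\ell}$)$-free graphs form a subcollection of it, there are only finitely many of them, which is exactly the claim; as $k$ and $\ell$ were arbitrary, this holds for all $k,\ell\ge 1$.

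The proof therefore reduces to two facts already in hand, and I do not expect a genuine new obstacle at this stage: all of the combinatorial difficulty has been absorbed into Lemma~\ref{lem:2P2ellsquidP3cP1fee}, whose argument rules out an induced $P_3+cP_1$ by producing either a forbidden $2P_2$, a forbidden \squid{\ell}, or a $k$-clique that a $k$-vertex-critical graph cannot contain. The only points to be careful about when assembling the corollary are (i) confirming $c\ge 0$ so that Theorem~\ref{thm:finiteP3ellP1freecrit} genuinely applies, which holds because $c\ge 1$, and (ii) noting that forbidding fewer induced subgraphs gives a larger class, so the desired inclusion of the $(2P_2$, \squid{\ell}$)$-free critical graphs inside the $(P_3+cP_1)$-free critical graphs is in the correct (and easy) direction.
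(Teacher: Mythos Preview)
Your proposal is correct and matches the paper's own argument essentially verbatim: the paper states that the theorem follows directly from Lemma~\ref{lem:2P2ellsquidP3cP1fee} and Theorem~\ref{thm:finiteP3ellP1freecrit}, which is precisely the deduction you spell out. The additional checks you include (that $c\ge 1$ and that the inclusion of classes goes the right way) are sound and only make the derivation more explicit.
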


Since $chair$ is an induced subgraph of \squid{2}, $claw+P_1$ is an induced subgraph of \squid{3}, and more generally  $K_{1,\ell}+P_1$ is an induced subgraph of \squid{\ell} for all $\ell\ge 1$, we get the following immediate corollaries of Theorem~\ref{thm:2P2ellsquidcrit} 

\begin{corollary}
There are only finitely many $k$-vertex-critical $(2P_2, chair)$-free graphs for all $k\ge 1$.
\end{corollary}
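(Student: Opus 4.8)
The statement to prove is the corollary that there are only finitely many $k$-vertex-critical $(2P_2, chair)$-free graphs for all $k \ge 1$.

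The plan is to derive this directly from Theorem~\ref{thm:2P2ellsquidcrit} by an induced-subgraph containment argument. First I would recall that the $chair$ is precisely the graph on five vertices consisting of a claw $K_{1,2}$... more carefully, from the description in the introduction, the $chair$ is the subgraph of the $(4,\ell)$-$squid$ picture induced by $\{u_1,u_2,u_4,w_1,w_2\}$; that is a path together with a pendant edge, equivalently $K_{1,3}$ with one edge subdivided, equivalently a $P_4$ plus a pendant vertex attached to a second vertex of the path. The key observation I would make is that this graph is an induced subgraph of the $(4,2)$-$squid$: in Figure~\ref{subfig:4squid}, delete $u_3$ and the leaves $w_3, \dots, w_\ell$ (keeping only $w_1, w_2$), and the induced subgraph on $\{u_1, u_2, u_4, w_1, w_2\}$ is exactly the $chair$. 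Hence every $chair$-free graph is $(4,2)$-$squid$-free, since forbidding a graph $H$ also forbids every graph containing $H$ as an induced subgraph.

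With that containment in hand, the argument is immediate: any $(2P_2, chair)$-free graph is also $(2P_2, (4,2)\text{-}squid)$-free, so the class of $k$-vertex-critical $(2P_2, chair)$-free graphs is contained in the class of $k$-vertex-critical $(2P_2, (4,2)\text{-}squid)$-free graphs, which is finite for each fixed $k$ by Theorem~\ref{thm:2P2ellsquidcrit} with $\ell = 2$. (Indeed this is exactly the reasoning the authors flag in the sentence preceding the corollary, "Since $chair$ is an induced subgraph of $(4,2)$-$squid$ \ldots".) So the only real content is verifying the induced-subgraph relationship, which is a finite check on the two five-vertex graphs.

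The main (and essentially only) obstacle is making sure the vertex identification is correct: one must confirm that deleting $u_3$ and all but two leaves from the $(4,2)$-$squid$ leaves no extra edges among $\{u_1, u_2, u_4, w_1, w_2\}$ beyond those of the $chair$, i.e.\ that $u_1 \nsim u_4$, $u_1 \nsim w_i$, $u_2 \nsim w_i$, and $w_1 \nsim w_2$ in the squid — all of which hold by construction since the only edges of the squid are the $4$-cycle edges $u_1u_2, u_2u_4, u_4u_3, u_3u_1$ and the pendant edges $u_4w_i$. Everything else is a one-line appeal to Theorem~\ref{thm:2P2ellsquidcrit}, so no further calculation is needed.
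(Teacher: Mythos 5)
Your proposal is correct and is exactly the paper's argument: the authors also obtain the corollary by noting that the $chair$ is the subgraph of the \squid{2} induced by $\{u_1,u_2,u_4,w_1,w_2\}$ and then invoking Theorem~\ref{thm:2P2ellsquidcrit} with $\ell=2$. Your verification of the induced-subgraph containment is accurate, so nothing is missing.
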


\begin{corollary}
There are only finitely many $k$-vertex-critical $(2P_2, claw+P_1)$-free graphs for all $k\ge 1$.
\end{corollary}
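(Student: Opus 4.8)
The plan is to deduce this corollary directly from Theorem~\ref{thm:2P2ellsquidcrit}, in exactly the way the $chair$ corollary is obtained: via an induced-subgraph containment. First I would verify that $claw+P_1$, that is $K_{1,3}+P_1$, occurs as an induced subgraph of the \squid{3}. Using the labelling of Figure~\ref{subfig:4squid} with $\ell=3$, the vertex $u_4$ together with its three leaves $w_1,w_2,w_3$ induces a $claw$, while the cycle vertex $u_1$ is adjacent only to $u_2$ and $u_3$ and hence is nonadjacent to every vertex of that $claw$; so $\{u_1,u_4,w_1,w_2,w_3\}$ induces a $claw+P_1$. This is simply the $\ell=3$ instance of the general observation (already noted above) that $K_{1,\ell}+P_1$ is an induced subgraph of the \squid{\ell}.

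Next I would note that forbidding $claw+P_1$ as an induced subgraph automatically forbids every graph that contains it as an induced subgraph; in particular, every ($2P_2$, $claw+P_1$)-free graph is ($2P_2$, \squid{3})-free. Consequently the class of $k$-vertex-critical ($2P_2$, $claw+P_1$)-free graphs is a subclass of the class of $k$-vertex-critical ($2P_2$, \squid{3})-free graphs, which by Theorem~\ref{thm:2P2ellsquidcrit} (applied with $\ell=3$) is finite for each $k$. Hence there are only finitely many $k$-vertex-critical ($2P_2$, $claw+P_1$)-free graphs for all $k\ge 1$.

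There is essentially no obstacle to overcome here: the only piece of content is the one-line induced-subgraph check of the first paragraph, and finiteness is then inherited wholesale from Theorem~\ref{thm:2P2ellsquidcrit}. Should a self-contained argument be preferred, one could instead repeat the proof of Lemma~\ref{lem:2P2ellsquidP3cP1fee} verbatim with $\ell=3$, showing that every $k$-vertex-critical ($2P_2$, $claw+P_1$)-free graph is $(P_3+cP_1)$-free for $c=2(k-1)+1$, and then invoke Theorem~\ref{thm:finiteP3ellP1freecrit}; but the containment route above is shorter and is the natural one given what has already been established.
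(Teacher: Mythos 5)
Your argument is exactly the paper's: the paper derives this corollary from Theorem~\ref{thm:2P2ellsquidcrit} by noting that $claw+P_1$ (indeed $K_{1,\ell}+P_1$ in general) is an induced subgraph of the \squid{3}, so every $(2P_2, claw+P_1)$-free graph is $(2P_2,$ \squid{3}$)$-free. Your induced-subgraph check on $\{u_1,u_4,w_1,w_2,w_3\}$ is correct, and the proposal is complete.
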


\begin{corollary}
There are only finitely many $k$-vertex-critical $(2P_2,K_{1,\ell}+P_1)$-free graphs for all $k\ge 1$.
\end{corollary}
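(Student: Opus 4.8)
The plan is to reduce this corollary to Theorem~\ref{thm:2P2ellsquidcrit} by checking that $K_{1,\ell}+P_1$ occurs as an induced subgraph of \squid{\ell}. Using the labelling of Figure~\ref{subfig:4squid}, the $4$-cycle of \squid{\ell} is $u_1u_2u_4u_3u_1$ and the $\ell$ leaves $w_1,\dots,w_\ell$ are all attached to $u_4$. I would verify that the set $\{u_4,w_1,\dots,w_\ell,u_1\}$ induces precisely $K_{1,\ell}+P_1$: the vertex $u_4$ is adjacent to each $w_i$, the leaves $w_1,\dots,w_\ell$ form an independent set, and $u_1$ is nonadjacent both to $u_4$ (these are opposite vertices of the $4$-cycle) and to every $w_i$ (each leaf has $u_4$ as its only neighbour). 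So $u_4$ together with $w_1,\dots,w_\ell$ induces the star $K_{1,\ell}$, and $u_1$ is the isolated vertex of the $P_1$.

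It follows that any graph containing an induced \squid{\ell} also contains an induced $K_{1,\ell}+P_1$; equivalently, every $(K_{1,\ell}+P_1)$-free graph is \squid{\ell}-free. Hence every $k$-vertex-critical $(2P_2,K_{1,\ell}+P_1)$-free graph is in particular a $k$-vertex-critical $(2P_2$, \squid{\ell}$)$-free graph, so the set of the former is contained in the set of the latter, which is finite by Theorem~\ref{thm:2P2ellsquidcrit}.

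Since this is a straightforward containment reduction, I do not anticipate a genuine obstacle; the only step needing care is confirming the \emph{induced} relation, i.e.\ that there are no additional edges among $\{u_4,w_1,\dots,w_\ell,u_1\}$ beyond those of $K_{1,\ell}+P_1$, which is immediate from the definition of \squid{\ell}. Note that this corollary subsumes the preceding one for $claw+P_1$, since $K_{1,3}=claw$ and hence $K_{1,3}+P_1=claw+P_1$.
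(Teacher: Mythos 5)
Your proof is correct and matches the paper's approach exactly: the paper also derives this corollary by observing that $K_{1,\ell}+P_1$ is an induced subgraph of the \squid{\ell} (your witness set $\{u_4,w_1,\dots,w_\ell,u_1\}$ is the right one) and then invoking Theorem~\ref{thm:2P2ellsquidcrit}. No gaps.
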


We note as well that $banner$ is another name for  \squid{1}, so our results also imply that there are only finitely many $k$-vertex-critical ($2P_2$, $banner$)-free graphs for all $k$. This result is not new though; it was recently shown in~\cite{Brause2022} that every $k$-vertex-critical ($P_5$, $banner$)-free graph has independence number less than $3$ and therefore there only finitely many such graphs by Ramsey's Theorem.  However, a special case of Lemma~\ref{lem:2P2ellsquidP3cP1fee} implies that every $k$-vertex-critical ($2P_2$, $banner$)-free graph is $(P_3+P_1)$-free and therefore by Theorem 3.1 in~\cite{CameronHoangSawada2022}, every such graph has independence number less than $3$. Thus our results give a new and extremely short proof (in fact the proof of Lemma~\ref{lem:2P2ellsquidP3cP1fee} only requires the first four sentences if restricted to $banner$-free graphs) of a slightly weaker version of the result in~\cite{Brause2022}.

\section{($2P_2$, \hl{\ell})-free}\label{sec:fountain}

Recall that \hl{\ell}, shown in Figure~\ref{subfig:3squid}, is the graph obtained from a triangle by attaching $\ell$ leaves to one of its vertices.

\begin{lemma}\label{lem:2P2Hellfreeimpliesellsquidfree}
Let $\ell,k\ge 1$. If $G$ is $k$-vertex-critical $(2P_2$, \hl{\ell}$)$-free, then $G$ is \squid{2\ell-1}-free.
\end{lemma}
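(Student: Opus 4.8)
The plan is to prove the contrapositive direction in a structural way: assume $G$ contains an induced $(4,2\ell-1)$-$squid$ and derive that $G$ contains either an induced $2P_2$ or an induced $(3,\ell)$-$squid$, contradicting the hypothesis. So suppose $\{u_1,u_2,u_3,u_4\}$ induces the $C_4$ (with $u_1\sim u_2$, $u_2\sim u_4$, $u_4\sim u_3$, $u_3\sim u_1$, and $u_1\nsim u_4$, $u_2\nsim u_3$) and $w_1,\dots,w_{2\ell-1}$ are the leaves attached to $u_4$. The key observation is to look at the vertex $u_1$, which is nonadjacent to $u_4$ and hence nonadjacent to all the leaves $w_i$ (since each $w_i$ has $u_4$ as its unique neighbour among the squid vertices). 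Now consider the pairs of vertices $\{u_1, u_3\}$ (an edge) and $\{u_4, w_i\}$ (an edge) for each $i$: since $u_1\nsim u_4$ and $u_1\nsim w_i$, if additionally $u_3\nsim w_i$ then $\{u_1,u_3,u_4,w_i\}$ induces a $2P_2$. Hence, to avoid $2P_2$, we must have $u_3\sim w_i$ for every leaf $w_i$. By the symmetric argument using the edge $\{u_1, u_2\}$ in place of $\{u_1,u_3\}$ (note $u_2\sim u_4$, so $u_2$ is already adjacent to $u_4$; this particular pair doesn't work) — actually the relevant symmetric play is: consider $u_2$, which is nonadjacent to $u_3$. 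Is $u_2$ adjacent to the $w_i$? We don't immediately know, so this is where I need to be a bit careful.

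The cleaner route: having established $u_3\sim w_i$ for all $i$, I now want to build a $(3,\ell)$-$squid$. The triangle should be $\{u_3, u_4, u_1\}$ — wait, $u_1\nsim u_4$, so that's not a triangle. Let me instead use the triangle $\{u_3, u_4, w_{?}\}$ — but $w_i\nsim w_j$, so attaching leaves there fails. The natural triangle is $\{u_1, u_3, w_1\}$: we have $u_1\sim u_3$, $u_3\sim w_1$, and $u_1\sim w_1$? No — $u_1\nsim w_1$. Hmm. So actually the triangle must involve $u_3$ together with two of its neighbours that are themselves adjacent. We know $u_3\sim u_1$, $u_3\sim u_4$, and $u_3\sim w_i$ for all $i$. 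Among $\{u_1, u_4, w_1,\dots,w_{2\ell-1}\}$: the only edge incident to $u_3$'s neighbourhood we can exploit is $u_1\sim u_4$? No, $u_1\nsim u_4$. And $u_4\sim w_i$. So $\{u_3, u_4, w_i\}$ is a triangle for each $i$! Good: $u_3\sim u_4$, $u_3\sim w_i$, $u_4\sim w_i$. Now I want $\ell$ leaves attached to one vertex of this triangle. Attach leaves to $u_3$: the leaves must be nonadjacent to $u_4$ and to $w_i$ and to each other, and adjacent to $u_3$. Candidates adjacent to $u_3$: the remaining $w_j$'s ($j\neq i$) are adjacent to $u_3$ but also to $u_4$ — bad. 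Attach leaves to $w_i$ instead: need neighbours of $w_i$ nonadjacent to $u_3, u_4$; but $w_i$'s only known neighbours are $u_3, u_4$ — bad.

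So the first attempt's bookkeeping is off; I need to reconsider which auxiliary vertex to pivot on. The right move is to also bring in $u_2$. By Lemma~\ref{lem:2P2freenonneighbconnected}-style reasoning or a direct $2P_2$ argument: consider the edge $\{u_3, u_1\}$ and the edge $\{u_4, w_i\}$ — already used. Instead use $\{u_2, u_1\}$ (edge, since $u_1\sim u_2$) against $\{u_4, w_i\}$: $u_1\sim u_4$? No. $u_1\sim w_i$? No. $u_2\sim u_4$? Yes. So no $2P_2$ forced. Use $\{u_2, u_4\}$ against... we need a disjoint edge with both endpoints nonadjacent to both $u_2$ and $u_4$. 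The pair $\{u_1, u_3\}$: $u_1\sim u_2$, so no. Hmm, $u_2$ is hard to separate. Let me reconsider: perhaps the correct target triangle is $\{u_1, u_3, u_2\}$? We have $u_1\sim u_2$, $u_1\sim u_3$, but $u_2\sim u_3$? No, $u_2\nsim u_3$ in the $C_4$. Dead end again. The genuine content must be: after forcing $u_3\sim w_i$ for all $i\in\{1,\dots,2\ell-1\}$, look at whether $u_1\sim w_i$ — we said no — so actually reconsider: maybe instead force adjacencies making $\{u_1,u_3\}\cup\{\text{half the }w_i\text{'s}\}$ or similar into a $(3,\ell)$-squid via a counting/pigeonhole split of the $2\ell-1$ leaves, which is surely why the bound is $2\ell-1$ rather than $\ell$. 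I expect the main obstacle to be exactly this: correctly identifying, from the $C_4$ plus the forced edges from $u_3$ to all leaves, a triangle with $\ell$ pendant leaves — the factor $2\ell-1$ strongly suggests splitting the $2\ell-1$ leaves into those adjacent to some fixed extra vertex and those not, applying pigeonhole to get $\ell$ on one side, and checking that side yields a genuine induced $(3,\ell)$-$squid$ while the other side (combined with a $2P_2$ argument) is excluded. I would set up that dichotomy carefully, verify all required non-edges among the chosen leaves (automatic, since $w_i\nsim w_j$) and between the chosen leaves and the triangle's non-apex vertices, and conclude.
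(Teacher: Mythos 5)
Your opening plan---``assume $G$ contains an induced \squid{2\ell-1} and derive that $G$ contains either an induced $2P_2$ or an induced \hl{\ell}''---cannot succeed, because that purely structural implication is false: the \squid{2\ell-1} itself is $2P_2$-free (its only disjoint pairs of edges always have an edge between them through $u_4$ or $u_1$) and triangle-free, hence \hl{\ell}-free. The hypothesis you never invoke, $k$-vertex-criticality, is essential. The paper's proof uses it at the very first step: since a vertex-critical graph has no comparable vertices (Lemma~\ref{lem:nocomparablecliques}), $u_2$ and $u_3$ have private neighbours $u_2'\sim u_2$, $u_2'\nsim u_3$ and $u_3'\sim u_3$, $u_3'\nsim u_2$ \emph{outside} the squid. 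This $u_2'$ is exactly the ``fixed extra vertex'' you correctly guessed must exist for the pigeonhole split of the $2\ell-1$ leaves: if $u_2'\sim u_4$, then $u_2'$ must be adjacent to at least $\ell$ of the $w_i$ (else the triangle $\{u_2',u_2,u_4\}$ with $\ell$ leaves nonadjacent to $u_2'$ is a \hl{\ell}), and then $2P_2$-freeness forces $u_2'\sim u_1$, producing the triangle $\{u_2,u_2',u_1\}$ with $\ell$ leaves attached at $u_2'$; the case $u_2'\nsim u_4$ is similar. You sensed the shape of the argument but never identified where the pivot vertex comes from, and the proposal ends without completing any case.

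There is also a concrete error in the one deduction you do attempt: $\{u_1,u_3,u_4,w_i\}$ cannot induce a $2P_2$, because $u_3\sim u_4$ in the $C_4$; consequently nothing forces $u_3\sim w_i$, and indeed $u_3\nsim w_i$ is part of the definition of the induced squid, so ``forcing'' that adjacency would contradict your own hypothesis. The subsequent attempts to locate a triangle inside the squid are doomed for the same reason ($C_4$ plus leaves is triangle-free); the triangle must use a new vertex supplied by criticality.
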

\begin{proof}
Let $G$ be a $k$-vertex-critical ($2P_2$, \hl{\ell})-free graph and suppose by way of contradiction that $G$ contains an induced \squid{2\ell-1} with the same labelling as Figure~\ref{subfig:4squid}. It must be the case that there are $u_2',u_3'\in V(G)$ such that  $u_2'\sim u_2$, $u_2'\nsim u_3$, $u_3'\sim u_3$, and $u_3'\nsim u_2$ otherwise $u_2$ and $u_3$ are comparable, contradicting Lemma~\ref{lem:nocomparablecliques}. Now, $\{ u_2, u_3, u'_2, u'_3 \}$ induces a $2P_2$ unless $u'_2 \sim u'_3$. There are now two cases to consider.\\

\noindent \textit{Case 1:} $u_2'\sim u_4$.

In this case, $u_2'$ must be adjacent to at least $\ell$ of the $w_i$'s, or else $\{u_2',u_2,u_4\}$ together with any $\ell$ of the $w_i$'s that are nonadjacent to $u_2'$ induce a \hl{\ell}, a contradiction. Let $W$ be the subset of the $w_i$'s that are adjacent to $u_2'$. We now have $\{u_2',w,u_1,u_3\}$ inducing a $2P_2$ for all $w\in W$ unless $u_2'\sim u_1$. Since $G$ is $2P_2$-free, we must have $u_2'\sim u_1$. But now $\{u_2,u_2',u_1\}$ together with any subset of $W$ with at least $\ell$ vertices induces a \hl{\ell}, a contradiction.\\

 \noindent \textit{Case 2:} $u_2'\nsim u_4$.

In this case, $u_2'\sim w_i$ for all $i\in\{1,\dots,\ell\}$, else $\{u_2,u_2',u_4,w_i\}$ would induce a $2P_2$. We now have $u_2'\sim u_1$, else $\{u_2',w_i,u_1,u_3\}$ induces a $2P_2$ for any $i\in\{1,\dots,\ell\}$. But now $\{u_2,u_2',u_1\}\cup\{w_1,w_2,\dots, w_{\ell}\}$ induces \hl{\ell}, a contradiction.\\

Since we reach contradictions in either case, we must contradict that fact that $G$ is \squid{2\ell-1}-free.
\end{proof}

Thus, we obtain the following theorem directly from Lemma~\ref{lem:2P2Hellfreeimpliesellsquidfree} and Theorem~\ref{thm:2P2ellsquidcrit}.

\begin{theorem}
There are only finitely many $k$-vertex-critical $($\hl{\ell}$)$-free graphs for all $k,\ell\ge 1$.
\end{theorem}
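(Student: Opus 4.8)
The plan is to deduce the theorem as an immediate corollary of the two preceding results in exactly the way the authors have set up. The final statement asserts that there are only finitely many $k$-vertex-critical $(2P_2,(3,\ell)$-$squid)$-free graphs for all $k,\ell\ge 1$ (the ``\hl{\ell}'' in the displayed statement, with the $2P_2$ presumably understood as part of the forbidden pair given the section heading). So first I would fix $k,\ell\ge 1$ and let $G$ be an arbitrary $k$-vertex-critical $(2P_2,(3,\ell)$-$squid)$-free graph. The goal is to show $G$ comes from a finite list.

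The key step is the chain of implications already established. By Lemma~\ref{lem:2P2Hellfreeimpliesellsquidfree}, since $G$ is $k$-vertex-critical and $(2P_2,(3,\ell)$-$squid)$-free, $G$ is $(4,2\ell-1)$-$squid$-free; combined with the hypothesis that $G$ is $2P_2$-free, $G$ is therefore a $k$-vertex-critical $(2P_2,(4,2\ell-1)$-$squid)$-free graph. Now apply Theorem~\ref{thm:2P2ellsquidcrit} with the parameter $\ell' = 2\ell-1$ (note $2\ell-1\ge 1$ since $\ell\ge 1$): there are only finitely many $k$-vertex-critical $(2P_2,(4,2\ell-1)$-$squid)$-free graphs. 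Since every $k$-vertex-critical $(2P_2,(3,\ell)$-$squid)$-free graph is one of these, the family in question is a subfamily of a finite family, hence finite. That is the whole argument.

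There is really no main obstacle here — the work has all been done in Lemma~\ref{lem:2P2Hellfreeimpliesellsquidfree} (the case analysis on whether $u_2'\sim u_4$) and in Theorem~\ref{thm:2P2ellsquidcrit} (which itself rests on Lemma~\ref{lem:2P2ellsquidP3cP1fee} and Theorem~\ref{thm:finiteP3ellP1freecrit}). The only thing one should be slightly careful about is the boundary parameter: one must check that $2\ell-1\ge 1$ so that Theorem~\ref{thm:2P2ellsquidcrit} applies, which holds for all $\ell\ge 1$ (when $\ell=1$ the $(3,1)$-$squid$ is the ``bull minus one leaf'', and $(4,1)$-$squid$ is the $banner$). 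One might also remark, as the authors do elsewhere, that this yields finiteness for specific small graphs $H$ obtained as induced subgraphs of $(3,\ell)$-$squid$ — for instance $K_3+P_1$ is an induced subgraph of $(3,1)$-$squid$, though that particular consequence is already subsumed by other known results. I would therefore write the proof in one or two sentences, mirroring the style of the other ``follows directly from'' theorems in the paper.

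\begin{proof}
Fix $k,\ell\ge 1$ and let $G$ be a $k$-vertex-critical $(2P_2,$\hl{\ell}$)$-free graph. Since $2\ell-1\ge 1$, Lemma~\ref{lem:2P2Hellfreeimpliesellsquidfree} gives that $G$ is \squid{2\ell-1}-free, so $G$ is a $k$-vertex-critical $(2P_2,$\squid{2\ell-1}$)$-free graph. By Theorem~\ref{thm:2P2ellsquidcrit} applied with parameter $2\ell-1$, there are only finitely many such graphs. Hence the family of $k$-vertex-critical $(2P_2,$\hl{\ell}$)$-free graphs is contained in a finite family and is therefore finite.
\end{proof}
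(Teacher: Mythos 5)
Your proof is correct and matches the paper's argument exactly: the theorem is obtained by combining Lemma~\ref{lem:2P2Hellfreeimpliesellsquidfree} with Theorem~\ref{thm:2P2ellsquidcrit} applied with parameter $2\ell-1$. Your observation that the theorem statement omits the ``$2P_2$'' (which must be understood from the section context) is also a fair reading of what is clearly a typo in the paper.
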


Since the graph $\overline{diamond+P_1}$ is another name for to \hl{2}, we get the following immediate corollary.

\begin{corollary}
There are only finitely many $k$-vertex-critical $(\overline{diamond+P_1})$-free graphs for all $k\ge 1$.
\end{corollary}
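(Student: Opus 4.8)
The plan is to show that $\overline{diamond+P_1}$ is exactly the graph \hl{2} (that is, the $(3,2)$-$squid$), so that the corollary becomes the immediate $\ell=2$ case of the preceding theorem on the finiteness of $k$-vertex-critical $(2P_2$, \hl{\ell}$)$-free graphs.

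First I would verify the isomorphism by a direct complementation check. Recall $diamond = K_4-e$; label its vertices $1,2,3,4$ so that the missing edge is $\{3,4\}$, and let $5$ be the isolated vertex of $diamond+P_1$. Passing to the complement, vertex $5$ becomes adjacent to each of $1,2,3,4$; among $\{1,2,3,4\}$ the only edge is $\{3,4\}$; and $1$ and $2$ have no other neighbours. Hence $\overline{diamond+P_1}$ is a triangle on $\{5,3,4\}$ together with two pendant leaves $1,2$ attached to the single vertex $5$, which is precisely \hl{2} with apex $u_1=5$, triangle $\{u_1,u_2,u_3\}=\{5,3,4\}$, and leaves $\{w_1,w_2\}=\{1,2\}$ in the labelling of Figure~\ref{subfig:3squid}.

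With the isomorphism established, the corollary follows directly: a graph is $(2P_2,\overline{diamond+P_1})$-free precisely when it is $(2P_2$, \hl{2}$)$-free, and by the theorem there are only finitely many $k$-vertex-critical graphs of the latter kind for every $k$. I do not anticipate any real obstacle; the only content is the routine complementation computation, since the substantive work already resides in Lemma~\ref{lem:2P2Hellfreeimpliesellsquidfree} and Theorem~\ref{thm:2P2ellsquidcrit}.
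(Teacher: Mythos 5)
Your proposal is correct and matches the paper's justification exactly: the paper simply notes that $\overline{diamond+P_1}$ is another name for the $(3,2)$-$squid$ and invokes the preceding theorem, and your complementation check verifying that isomorphism is accurate.
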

%
%

\section{Exhaustive Generation for Small $k$}\label{sec:critgen}

Our results imply that there are several new families of graphs for which \kcol{} can be solved in polynomial-time and certified with a $k$-colouring or a $(k+1)$-vertex-critical induced subgraph graph for any fixed $k$. The implementation of any of these algorithms, however, requires the complete set of all $(k+1)$-vertex-critical graphs in that family. For two of the families, we can give these complete sets for all $k\le 6$, and therefore the corresponding polynomial-time certifying \kcol{} algorithms can be readily implemented. These two families are $(2P_2,bull)$-free and $(2P_2,banner)$-free since $k$-vertex-critical graphs in both families were shown to be $(P_3+P_1)$-free.
In~\cite{CameronHoangSawada2022} it was shown that every $k$-vertex-critical $(P_3+P_1)$-free graph has independence number at most $2$ and order at most $2k-1$, which allowed for the exhaustive generation of all such graphs for $k\le 7$. These graphs, available at~\cite{graphfiles}, were searched for those that are also $2P_2$-free to provide the complete sets of all $k$-vertex-critical $(2P_2,H)$-free graphs for $H=banner$ or $H=bull$ for all $k\le 7$.  Note that since $\alpha(banner)=\alpha(bull)=3$, $banner$ and $bull$ are already forbidden induced subgraphs from each of the $k$-vertex-critical $(P_3+P_1)$-free graphs since they necessarily have independence number at most $2$. The graphs in graph6 format are available at~\cite{2P2bullfreefiles} and the number of each order is summarized in Table~\ref{tab:5to72P2bullcrit}.

\begin{table}[!h]
\begin{center}  \small
\renewcommand\arraystretch{1.2}
\begin{tabular}{|r|r|r|r|r|} 
\hline
$n$  &  $4$-vertex-critical  & $5$-vertex-critical & $6$-vertex-critical & $7$-vertex-critical  \\ \hline
4   	& $1$  & $0$      &  $0$     & $0$ \\ \hline
5   	& $0$  & $1$      &  $0$     & $0$ \\ \hline
6  		& $1$  & $0$   	  &  $1$     & $0$ \\ \hline
7   	& $2$  & $1$   	  &  $0$     & $1$ \\ \hline
8   	& $0$  & $2$   	  &  $1$     & $0$ \\ \hline
9   	& $0$  & $11$ 	  &  $2$     & $1$  \\ \hline
10  	& $0$  & $0$   	  &  $12$    & $2$    \\ \hline
11  	& $0$  & $0$   	  &  $126$   & $12$ \\ \hline 
12  	& $0$  & $0$   	  &  $0$  	 & $128$ \\ \hline
13  	& $0$  & $0$   	  &  $0$  	 & $3806$ \\ \hline \hline
total 	& $4$  & $15$ 	  & $142$ 	 & $3947$ \\ \hline
\end{tabular}
\caption{Number of $k$-vertex-critical $(2P_2,H)$-free graphs of order $n$ for $k\le 7$ where $H$ is $banner$ or $bull$.}\label{tab:5to72P2bullcrit}
\end{center}
\end{table}

\section{Conclusion}\label{sec:conclusion}

Of the 11 graphs $H$ of order $5$ where it remains unknown if there are only finitely many $k$-vertex-critical $(P_5,H)$-free graphs for all $k$, we have shown finiteness for four of them in the more restricted $2P_2$-free case. A clear open question that remains is if these still hold when forbidding $P_5$ instead of $2P_2$, or (perhaps even more interestingly) if for some they do not. The question also remains completely open for the other $7$ graphs of order $5$; the one we find most interesting is $H=P_4+P_1$, as it is still unknown whether there are only finitely many $k$-vertex-critical $(P_4+P_1)$-free for any given $k\ge 5$ even without additional restrictions.  We fully expect that our new proof technique of reducing to $(P_3+\ell P_1)$-free for some $\ell$ will be useful for proving the finiteness of $k$-vertex-critical graphs in these open cases and indeed for other families of graphs. One place to start for interested readers might be by forbidding additional induced subgraphs and, in particular,  considering $k$-vertex-critical $(2P_2, K_3+P_1,H)$-free or even $(2P_2, K_3+P_1,C_5,H)$-free graphs. These additional forbidden induced subgraphs may seem to be far too restrictive, but we feel they are justified entry points as all known infinite families of $k$-vertex-critical graphs that are $2P_2$-free are also $(K_3+P_1)$-free and, for $k\ge 6$, there are families of $k$-vertex-critical $(2P_2,K_3+P_1,C_5)$-free graphs (see the aforementioned construction in~\cite{CameronHoang2023P5C5}).

\section*{Acknowledgements}
The first two authors were supported by the Natural Sciences and Engineering Research Council of Canada (NSERC)  USRA program. The third author also gratefully acknowledges research support from NSERC (grants RGPIN-2022-03697 and DGECR-2022-00446) and Alberta Innovates. The research support from Alberta Innovates was used by the third author to hire the fourth author.

%
%

\bibliographystyle{abbrv}
\bibliography{refs}

\end{document}